\documentclass[11pt]{article}
\usepackage{yub}
\usepackage{songmei}
\usepackage{subfigure}

\newcommand{\rgrad}{{\mathop{\rm grad\,}}}
\newcommand{\rhess}{{\mathop{\rm Hess\,}}}
\newcommand{\proj}{{\mc{P}}}

\def\smoothf{{\beta_f}}
\def\smoothF{{\beta_F}}
\def\smoothP{{\beta_P}}
\def\smoothr{{\beta_E}}

\def\lipf{{L_f}}
\def\lipF{{L_F}}
\def\minsvF{\gamma_F}

\def\rcond{\kappa_R}
\def\cd{{C_d}}

\def\cra{C_{r,1}}
\def\crb{C_{r,2}}
\def\crat{\wt{C}_{r,1}}
\def\crbt{\wt{C}_{r,2}}
\def\cb{{C_b}}
\def\rhof{{\rho_f}}
\def\rhoF{{\rho_F}}

\def\cM{{\mathcal M}}
\def\id{{\mathsf I}}
\def\sT{{\mathsf T}}
\def\proj{{\mathsf P}}
\def\rhess{{\rm Hess}}
\def\D{{\mathsf D}}
\def\rgrad{{\rm grad}}
\def\smoothD{{\beta_D}}

\def\Gf{{G_f}}
\def\Lbfeps{{\underline{f}}}

\def\hessmin{{\lambda_{\min}}}
\def\hessmax{{\lambda_{\max}}}

\def\shownotes{0}  
\ifnum\shownotes=1
\newcommand{\authnote}[2]{$\ll$\textsf{\footnotesize #1 notes: #2}$\gg$}
\else
\newcommand{\authnote}[2]{}
\fi

\begin{document}
\title{Analysis of Sequential Quadratic Programming \\
  through the Lens of Riemannian Optimization}
\author{Yu~Bai\footnote{Department of Statistics, Stanford
    University.~\texttt{yub@stanford.edu}.}
  \and
  Song~Mei\footnote{Institute for Computational and Mathematical
    Engineering, Stanford University.~\texttt{songmei@stanford.edu}.}}
\maketitle

\begin{abstract}
  We prove that a ``first-order'' Sequential Quadratic Programming
  (SQP) algorithm for equality constrained optimization has local
  linear convergence with rate $(1-1/\rcond)^k$, where $\rcond$ is the
  condition number of the Riemannian Hessian, and global convergence
  with rate $k^{-1/4}$. Our analysis builds on insights from
  Riemannian optimization -- we show that the SQP and Riemannian
  gradient methods have nearly identical behavior near the constraint
  manifold, which could be of broader interest for understanding
  constrained optimization.
\end{abstract}


\section{Introduction}

In this paper, we consider the equality-constrained optimization problem
\begin{equation}
  \label{problem:manopt}
  \begin{aligned}
    \minimize_{x \in \R^n} & ~~ f(x), \\
    \subjectto & ~~ x \in \cM = \{x : F(x)=0 \},
  \end{aligned}
\end{equation}
where we assume $f : \R^n \rightarrow \R$ and $F: \R^n \rightarrow
\R^m$ are $C^2$ smooth functions with $m\le n$.

The focus of this paper is on the local and global convergence rate of
``first-order'' methods for~\eqref{problem:manopt}: methods that only
query $\grad f(x)$ at each iteration (but can do whatever they want
with the constraint), e.g. projected gradient descent.
The iteration complexity of first-order unconstrained
optimization has been a foundational result in theoretical machine
learning~\cite{NemirovskiYu83,Bubeck15}, and it would be of interest
to improve our understanding in the constrained case too.




While numerous ``first-order'' methods can solve
problem~\eqref{problem:manopt}~\cite{Bertsekas99,nocedal2006sequential},
we will restrict attention to two types of methods: Riemannian
first-order methods and Sequential Quadratic Programming, which we now
briefly review. When $\mc{M}$ has a manifold structure near $x_\star$,
one could use Riemannian optimization algorithms~\cite{AbsilMaSe09},
whose iterates are maintained on the constraint set $\mc{M}$.
Classical Riemannian algorithms proceed by computing the
Riemannian gradient and then taking a descent step along the geodesics
based on this gradient~\cite{luenberger1972gradient,gabay1982minimizing}.
Later, Riemannian algorithms are simplified
by making use of \textit{retraction}, a mapping from the tangent space
to the manifold that can replace the necessity of computing the exact
geodesics while still maintaining the same convergence
rate. Intuitively, first-order Riemannian methods can be viewed as
variants of projected gradient descent that utilize the manifold
structure more carefully. Analyses of many such Riemannian algorithms
are given in \citep[Section 4]{AbsilMaSe09}.

An alternative approach for solving problem~\eqref{problem:manopt} is
Sequential Quadratic Programming (SQP)~\citep[Section
18]{nocedal2006sequential}.  Each iteration of SQP
solves a quadratic programming problem which minimizes a quadratic
approximation of $f$ on the linearized constraint set $\set{x:F(x_k) +
  \grad F(x_k)(x-x_k)=0}$. 
When the quadratic approximation uses the Hessian of the objective
function, the SQP is equivalent to Newton method solving nonlinear
equations. When the full Hessian is intractable, one can either
approximate the Hessian with BFGS-type updates, or just use some raw
estimate such as a big PSD matrix~\cite{boggs1995sequential}. The
iterates need not be (and are often not) feasible, which makes SQP
particularly appealing when it is intractable to obtain a feasible
start or projection onto the constraint set.


\subsection{Contribution and related work}
In this paper, we consider the following ``first-order'' SQP
algorithm,
which sequentially solves the quadratic program
\begin{equation}
  \label{algorithm:constrained}
  \begin{aligned}
    x_{k+1} = 
    \argmin & ~~  f(x_k) +\<\nabla f(x_k), x-x_k\> +
    \frac{1}{2\eta}\dl x-x_k \dl_2^2 \\
    \subjectto & ~~ F(x_k) + \nabla F(x_k)(x-x_k) = 0,
  \end{aligned}
\end{equation}
where $\eta>0$ is the stepsize. Each iterate only requires
$\set{\grad f, \grad F}$ (hence first-order). This algorithm can be
seen as a cheap prototype SQP (compared with BFGS-type) and is more
suitable than Riemannian methods when the retraction onto the
constrain set is intractable.


We prove that the SQP~\eqref{algorithm:constrained} has local linear
convergence with rate $(1-1/\rcond)^k$ where $\rcond$ is the condition
number of the Riemannian Hessian at $x_\star$
(Theorem~\ref{theorem:convergence}), and global convergence with rate
$k^{-1/4}$ (Theorem~\ref{thm:global_closeness_and_convergence}).
Our work differs from the existing literature in the following ways.

\begin{enumerate}
\item We provide explicit convergence rates which is lacking in prior
  work on SQP. Existing local convergence analysis has focused more on
  the local quadratic convergence of more expensive BFGS-type
  SQPs~\cite{boggs1995sequential,nocedal2006sequential}, whereas
  global convergence results are mostly
  asymptotic~\cite{boggs1995sequential,solodov2009global}.
\item We observe and make explicit the fact that the SQP iterates stay
  ``quadratically close'' to the manifold when initialized near it
  (though potentially far from $x_\star$) -- see
  Figure~\ref{fig:quadratic} for an illustration.
  Such an observation
  connects first-order SQP to Riemannian gradient methods and allows
  us to borrow insights from Riemannian optimization to analyze the SQP.
\item We provide new analysis plans for SQP, based on the fact that
  SQP iterates quickly becomes nearly identical to Riemannian gradient
  steps once it gets near the constraint set.
  Our local analysis builds on a new potential function
  \begin{equation*}
    \| \proj_{x_\star}(x_k - x_\star) \|_2^2 + \sigma \|
    \proj_{x_\star}^\perp (x_k - x_\star) \|_2
  \end{equation*}
  for some $\sigma > 0$ (see Section~\ref{sec:geometry} for definition
  of the projections), as opposed to the traditionally used exact
  penalty functions. Our global analysis
  constructs descent lemmas similar to those in Riemannian gradient
  methods with additional second-order error terms. These results can
  be of broader interest for understanding constrained optimization.



\end{enumerate}

\begin{figure}[h!]
  \begin{center}
    \includegraphics[width=0.4\textwidth]{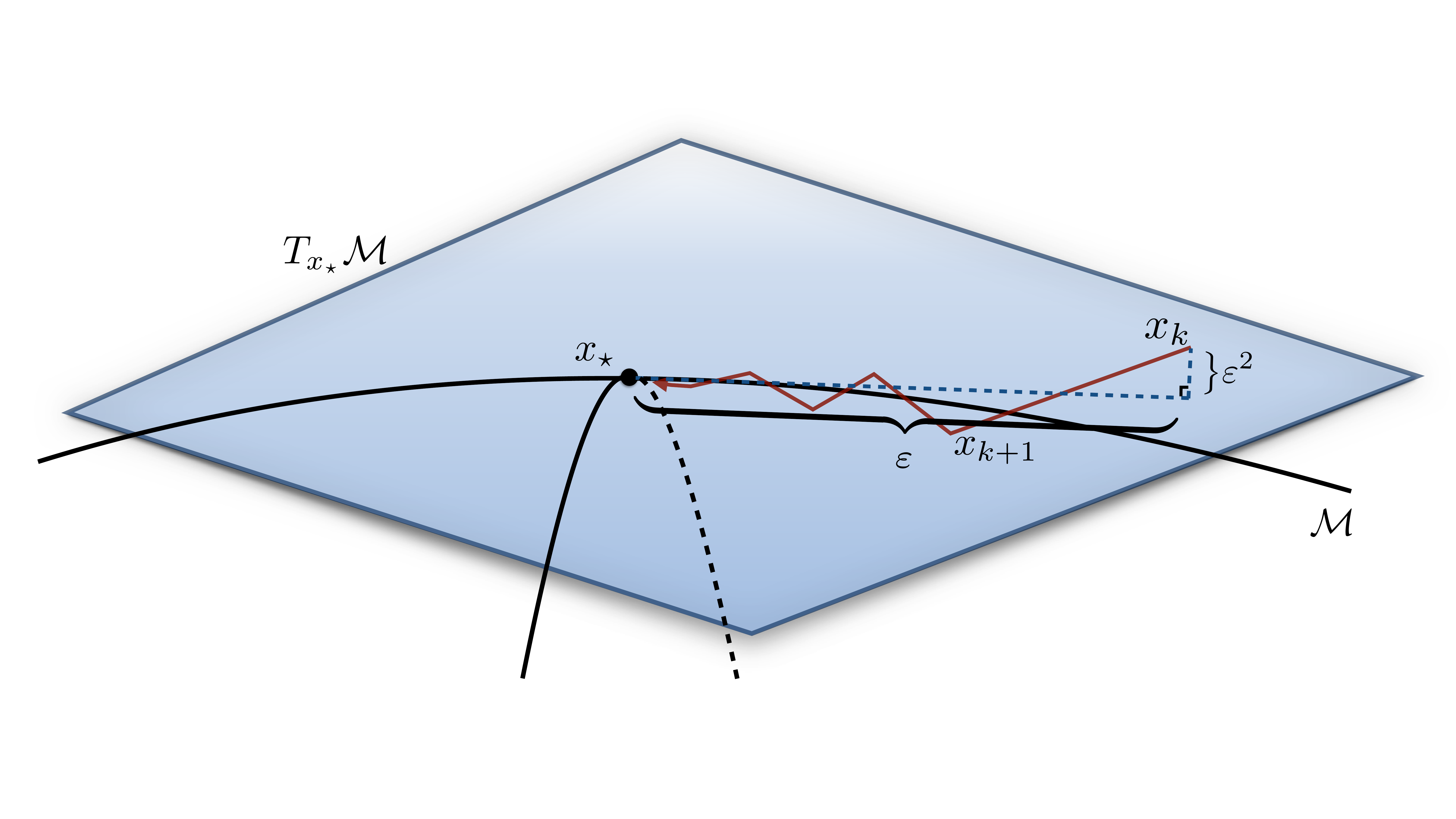}
  \end{center}
  \caption{Illustration for SQP iterates}\label{fig:quadratic}
\end{figure}

\paragraph{Related work}
The convergence rate of many first-order algorithms on
problem~\eqref{problem:manopt}
are shown to achieve
local linear convergence with rate $(1 - 1/\rcond)$, which is termed as
the \textit{canonical rate} in~\cite{luenberger2008linear}.
Riemannian algorithms that achieve the canonical rate include geodesic
gradient projection~\citep[Theorem 2]{luenberger1972gradient},
geodesic steepest descent~\citep[Theorem 4.4]{gabay1982minimizing},
Riemannian gradient descent~\citep[Theorem 4.5.6]{AbsilMaSe09}. The
canonical rate is also achieved by the modified Newton method on the
quadratic penalty function~\citep[Section 15.7]{luenberger2008linear},
which resembles an SQP method in spirit.

Though analyses are well-established for both the Riemannian and the
SQP approach (even by the same author in~\cite{gabay1982minimizing}
for the Riemannian approach and~\cite{gabay1982reduced} for the SQP
approach), the connection between them did not receive much
attention until the past 10 years. Such connection
is re-emphasized in~\cite{absil2009all}: the authors
pointed out that the feasibly-projected sequential quadratic
programming (FP-SQP) method in \cite{wright2004feasible} gives the
same update as the Riemannian Newton update. \citet{MishraSe16} used 
this connection to provide a framework for 
selecting a preconditioning metric for Riemannian optimization, in particular 
when the Riemannian structure is sought on a quotient manifold. 
However, these connections
are established for second-order methods between the Riemannian and the
SQP approaches, and such connection for first-order methods are
not---we believe---explicitly pointed out yet.

\paragraph{Paper organization}
The rest of this paper is organized as the following. In
Section~\ref{sec:preliminaries}, we state our assumptions and give
preliminaries on the Riemannian geometry on and off the manifold
$\mc{M}$.  We present our local and global convergence result in
Section~\ref{sec:main} and~\ref{sec:global}, and prove them in
Section~\ref{sec:proof} and~\ref{sec:proof-global}. 
We give an example in Section~\ref{section:example} and perform
numerical experiments in Section~\ref{sec:exp}.


\subsection{Notation}
For a matrix $A \in \R^{m \times n}$, we denote
$A^{\dagger} \in \R^{n \times m}$ as its Moore-Penrose inverse,
$A^\sT \in \R^{n \times m}$ as its transpose, and $A^{\dagger \sT}$ as
the transpose of its Moore-Penrose inverse. As $n \ge m$ and $\rank(A) = m$, we have
$A^\dagger = A^\sT(A A^\sT)^{-1}$. We denote $\sigma_{\min}(A)$ to be 
the least singular value of matrix $A$. For a $k$'th order tensor
$T \in \R^{n_1 \times n_2 \times \cdots \times n_k}$, and $k$ vectors
$u_1 \in \R^{n_1}, \ldots, u_k \in \R^{n_k}$, we denote
$T[u_1, \ldots, u_k] = \sum_{i_1, \ldots, i_k} T_{i_1\cdots i_k} u_{1,
  i_1} \cdots u_{k, i_k}$ as tensor-vectors multiplication. The
operator norm of tensor $T$ is defined as
$\dl T \dl_{\op} = \sup_{\dl u_1 \dl_2 = 1, \ldots, \dl u_k \dl_2 = 1}
T[u_1, \ldots, u_k]$. 

For a scaler-valued function $f: \R^n \rightarrow \R$, we write its
gradient at $x \in \R^n$ as a column vector $\nabla f(x) \in \R^n$. We
write its Hessian at $x \in \R^n$ as a matrix
$\nabla^2 f(x) \in \R^{n \times n}$, and its third order derivative at
$x$ as a third order tensor
$\nabla^3 f(x) \in \R^{n \times n \times n}$. For a vector-valued
function $F: \R^n \rightarrow \R^m$, its Jacobian matrix at
$x \in \R^n$ is an $m \times n$ matrix
$\nabla F(x) \in \R^{m \times n}$, and its Hessian matrix at $x$ as a
third order tensor $\nabla^2 F(x) \in \R^{m \times n \times n}$.

\section{Preliminaries}\label{sec:preliminaries}
\subsection{Assumptions}\label{sec:assumptions}
Let $x_\star$ be a local minimizer of
problem~\eqref{problem:manopt}.
Throughout the rest of this paper, we make the following assumptions
on problem~\eqref{problem:manopt}. In particular, all these
assumptions are local, meaning that they only depend on the properties
of $f$ and $F$ in $\ball(x_\star,\delta)$ for some $\delta>0$.
\begin{assumption}[Smoothness]
  \label{assumption:smoothness}
  Within $\ball(x_\star, \delta)$, the functions $f$ and $F$ are $C^2$
  with local Lipschitz constants
  $\lipf, \lipF$, Lipschitz gradients with constants $\smoothf$,
  $\smoothF$, and Lipschitz Hessians with constants $\rhof$, $\rhoF$. 
\end{assumption}
\begin{assumption}[Manifold structure and constraint qualification]
  \label{assumption:cq}
  The set $\mc{M}$ is a $m$-dimensional smooth submanifold of $\R^n$.
  Further, $\inf_{x \in \ball(x_\star, \delta)}
  \sigma_{\min}(\nabla F(x))\ge\minsvF$ for some constant
  $\minsvF>0$. 
\end{assumption}
Smoothness and constraint qualification together implies that the
constraints $F(x)$ are well-conditioned and
problem~\eqref{problem:manopt} is $C^2$ near $x_\star$. In particular,
we can define a matrix $\rhess f(x_\star)$ via the formula
\begin{equation}
  \begin{aligned}
    & \quad \rhess f(x_\star) = \proj_{x_\star} \nabla^2 f(x_\star)
    \proj_{x_\star} - \sum_{i=1}^m [ \nabla F(x_\star)^{\dagger \sT} \nabla
    f(x_\star)]_i \cdot  \proj_{x_\star} \nabla^2 F_i(x_\star)
    \proj_{x_\star}, 
  \end{aligned}
\end{equation}
where
\begin{equation}
\begin{aligned}
\proj_{x_\star} = \id_n - \nabla F(x_\star)^\dagger \nabla F(x_\star). 
\end{aligned}
\end{equation}
We can see later that $\rhess f(x_\star)$ is the matrix representation of the Riemannian Hessian of function $f$ on $\cM$ at $x_\star$. 
\begin{assumption}[Eigenvalues of the Riemannian Hessian]
  \label{assumption:quadratic-growth}
  Define
  \begin{equation*}
    \begin{aligned}
      \lambda_{\max} = \sup\{ \< u, \rhess f(x_\star) u\> : \dl u
      \dl_2 = 1, \nabla F(x_\star) u = 0 \},\\ 
      \lambda_{\min} = \inf\{ \< u, \rhess f(x_\star) u\>: \dl u \dl_2
      = 1, \nabla F(x_\star) u = 0 \}.\\ 
    \end{aligned}
  \end{equation*}
  We assume $0 < \hessmin \le \hessmax < \infty$. We call $\rcond =
  \hessmax/\hessmin$ the condition number of $\rhess f(x_\star)$. 
\end{assumption}

\subsection{Geometry on the manifold $\cM$}
\label{sec:geometry}
Since we assumed that the set $\cM$ is a smooth submanifold of $
\R^{n}$, we endow $\cM$ with the Riemannian geometry induced by the
Euclidean space $\R^{n}$. At any point $x \in \cM$, the tangent space
(viewed as a subspace of $\R^n$) is obtained by taking the
differential of the equality constraints
\begin{equation}
T_{x} \cM= \{  u \in \R^n : \nabla F(x) u = 0  \}.
\end{equation}
Let $\proj_x$ be the orthogonal projection operator from $\R^n$ onto $T_x \cM$. For any $u \in \R^n$, we have
\begin{equation}
\begin{aligned}
\proj_x (u) &= [\id_n - \nabla F(x)^\sT (\nabla F(x) \nabla F(x)^\sT)^{-1} \nabla F(x) ] u. 
\end{aligned}
\end{equation}
Let $\proj_x^{\perp}$ be the orthogonal projection operator from $\R^n$ onto the complement subspace of $T_x \cM$. For any $u \in \R^n$, we have
\begin{equation}
\begin{aligned}
\proj_x^{\perp} (u) &= \nabla F(x)^\sT (\nabla F(x) \nabla F(x)^\sT)^{-1} \nabla F(x) u. 
\end{aligned}
\end{equation}
With a little abuse of notations, we will not distinguish $\proj_x$ and $\proj_x^{\perp}$ with their matrix representations. That is, we also think of $\proj_x, \proj_x^{\perp} \in \R^{n \times n}$ as two matrices. 

We denote $\nabla f(x)$ and $\rgrad f(x)$ 
respectively the Euclidean gradient and the Riemannian gradient of $f$ at $x \in \cM$. 
The Riemannian gradient of $f$ is the projection of the Euclidean gradient onto the tangent space
\begin{equation}
  \begin{aligned}
    \rgrad f(x) = \proj_x (\nabla f(x)) = [\id_n - \nabla F(x)^\sT (\nabla F(x) \nabla F(x)^\sT)^{-1}
    \nabla F(x) ] \nabla f(x).
  \end{aligned}
\end{equation}
Since $x_\star$ is a local minimizer of $f$ on the manifold $\cM$, we have $\rgrad f(x_\star) = 0$.

At $x \in \cM$, let $\nabla^2 f(x)$ and $\rhess f(x)$ be respectively the Euclidean and the Riemannian Hessian of $f$. The Riemannian Hessian is a symmetric operator on the tangent space and is given by projecting the directional derivative of the gradient vector field. That is, for any $u, v \in T_x \cM$, we have (we use $\D$ to denote the directional derivative)
\begin{equation}
  \begin{aligned}
    & \rhess f(x) [u, v] = \< v, \proj_x ( \D \rgrad f(x) [u] )\> \\
    = & \<v, \proj_x \cdot \nabla^2 f(x) u - \proj_x \cdot (\D
    \proj_x^{\perp} [u])\cdot \nabla f\> \\
    = & v^\sT \proj_x \nabla^2 f(x) \proj_x u - \nabla^2 F(x)[\nabla
    F(x)^{\dagger \sT} \nabla f(x), u, v]. 
\end{aligned}
\end{equation}
With a little abuse of notation, we will not distinguish the Hessian operator with its matrix representation. That is
\begin{equation}
  \begin{aligned}
    \rhess f(x)  = \proj_x \nabla^2 f(x) \proj_x - \sum_{i=1}^m [ \nabla
    F(x)^{\dagger \sT} \nabla f(x)]_i \cdot  \proj_x \nabla^2 F_i(x)
    \proj_x.
  \end{aligned}
\end{equation}

\subsection{Geometry off the manifold $\cM$}
We can extend the definition of the matrix representations of the above Riemannian quantities outside the manifold $\cM$. For any $x \in \R^n$, we denote 
\begin{equation}
\begin{aligned}
& \quad \proj_x = \id_n - \nabla F(x)^\sT (\nabla F(x) \nabla F(x)^\sT)^{-1} \nabla F(x),\\
& \quad \proj_x^{\perp} = \nabla F(x)^\sT (\nabla F(x) \nabla F(x)^\sT)^{-1} \nabla F(x),\\
& \quad \rgrad f(x) = \proj_x \grad f(x).
\end{aligned}
\end{equation}

By the constraint qualification assumption (Assumption
\ref{assumption:cq}), $\nabla F(x)\nabla F(x)^\sT$ is invertible and
the quantities above are well defined in $\ball(x_\star, \delta)$. We
call $\rgrad f(x)$ the extended Riemannian gradient of $f$ at $x$,
which extends the Riemannian gradient outside the manifold $\cM$ as
$(f,F)$ are still well-defined there. 

\subsection{Closed-form expression of the SQP iterate}
The above definitions makes it possible to have a concise closed-form
expression for the SQP iterate~\eqref{algorithm:constrained}. Indeed,
as each iterate solves a standard QP, the expression can be obtained
explicitly by writing out the optimality condition. Letting $x_k = x$,
the next iteration $x_{k+1} = x_+$ is given by 
\begin{equation}
  \label{equation:delta}
  \begin{aligned}
    & x_+  = x  -\eta [\id_n - \nabla F(x)^\sT (\nabla F(x) \nabla
    F(x)^\sT )^{-1} \nabla F(x)]\nabla f(x) \\
    & \quad\quad\quad - \nabla F(x)^\sT (\nabla F(x) \nabla
    F(x)^\sT )^{-1} F(x) \\
    & = x - \eta\proj_x \grad f(x) - \nabla F(x)^\sT (\nabla F(x) \nabla
    F(x)^\sT )^{-1} F(x) \\
    & = x - \eta\rgrad f(x) - \grad F(x)^\dagger F(x).
  \end{aligned}
\end{equation}
We will frequently refer to this expression in our proof.

\section{Main results}
\subsection{Local convergence theorem}
\label{sec:main}
Under Assumptions~\ref{assumption:smoothness},~\ref{assumption:cq}
and~\ref{assumption:quadratic-growth}, we show that the SQP
algorithm~\eqref{algorithm:constrained} converges locally linearly
with rate $1-1/\rcond$. The proof can be found in
Section~\ref{sec:proof}.
\begin{theorem}[Local linear convergence of SQP with canonical rate]
  \label{theorem:convergence}
  There exists $\eps > 0$ and a constant $\sigma >0$ such that the
  following holds. Let $x_0 \in \ball(x_\star, \eps)$ and $x_k$ be
  the iterates of Equation (\ref{algorithm:constrained}) with stepsize
  $\eta=1/\lambda_{\max}(\rhess f(x_\star))$. Letting
  \begin{equation*}
    \begin{aligned}
      a_k =& \dl \proj_{x_\star} (x_k-x_\star) \dl_2,\\
      b_k =& \dl \proj_{x_\star}^\perp (x_k-x_\star) \dl_2,
    \end{aligned}
  \end{equation*}
  we have
  \begin{equation*}
    a_{k+1}^2 + \sigma b_{k+1} \le \Big(1 -
      \frac{1}{2\rcond}\Big)^2(a_k^2 + \sigma b_k), 
  \end{equation*}
  where
  $\rcond = \lambda_{\max}(\rhess f(x_\star))/\lambda_{\min}(\rhess
  f(x_\star))$ is the condition number of the Riemannian Hessian of
  function $f$ on the manifold $\cM$ at $x_\star$. Consequently, the
  distance $\dl x_k-x_\star \dl_2^2=a_k^2+b_k^2$ also converges
  linearly: $$\ltwo{x_k-x_\star}^2\le O\left((1 -
    1/(2\rcond))^k\right).$$
\end{theorem}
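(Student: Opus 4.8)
The plan is to analyze the coupled dynamics of the tangential part $a_k$ and the normal part $b_k$ directly from the closed-form SQP update~\eqref{equation:delta}, $x_+ = x - \eta\rgrad f(x) - \grad F(x)^\dagger F(x)$, and show the two-part potential $a_k^2 + \sigma b_k$ contracts by $(1-1/(2\rcond))^2$. First I would set up local coordinates around $x_\star$: write $x_k - x_\star = u_k + v_k$ where $u_k = \proj_{x_\star}(x_k - x_\star)$ is tangential (so $a_k = \ltwo{u_k}$) and $v_k = \proj_{x_\star}^\perp(x_k - x_\star)$ is normal (so $b_k = \ltwo{v_k}$). The key structural facts I would establish are: (i) the term $\grad F(x)^\dagger F(x)$ essentially performs a Newton step toward the manifold, so it kills the normal component to second order — i.e. $b_{k+1} \lesssim \smoothF\gamma_F^{-2}(a_k^2 + b_k^2) + (\text{cross terms})$, giving the quadratic closeness to $\cM$ advertised in the introduction; and (ii) on the tangential part, since $\rgrad f(x_\star) = 0$ and $\rhess f(x_\star) = \proj_{x_\star}\nabla^2 f(x_\star)\proj_{x_\star} - \sum_i[\nabla F^{\dagger\sT}\nabla f]_i \proj_{x_\star}\nabla^2 F_i\proj_{x_\star}$, a Taylor expansion of $\proj_x\grad f(x)$ around $x_\star$ gives $\rgrad f(x_k) \approx \rhess f(x_\star)\, u_k + O(\ltwo{x_k-x_\star}^2)$, where crucially the $\nabla^2 F$ contribution from the non-feasibility $v_k$ combines with the curvature term to reproduce exactly the Riemannian Hessian (this is the "SQP $\approx$ Riemannian gradient step" phenomenon). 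Hence $u_{k+1} \approx (\id - \eta\,\rhess f(x_\star))u_k + O(a_k^2 + b_k^2)$.

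With these expansions in hand, the main computation is: with $\eta = 1/\hessmax$, the operator $\id - \eta\,\rhess f(x_\star)$ restricted to the tangent space has spectral norm $\le 1 - \hessmin/\hessmax = 1 - 1/\rcond$, so to leading order $a_{k+1} \le (1-1/\rcond)a_k + C(a_k^2 + b_k)$. Squaring gives $a_{k+1}^2 \le (1-1/\rcond)^2 a_k^2 + (\text{lower order})\cdot(a_k^3 + a_k b_k + b_k^2)$. Separately, from (i), $b_{k+1} \le C'(a_k^2 + b_k^2) \le C'(a_k^2 + \eps\, b_k)$ on the $\eps$-ball. The heart of the argument is then choosing the weight $\sigma$ and the radius $\eps$ so that
\begin{equation*}
  a_{k+1}^2 + \sigma b_{k+1} \le (1-1/\rcond)^2 a_k^2 + \sigma C'(a_k^2 + \eps b_k) + (\text{cross terms}) \le \Big(1 - \tfrac{1}{2\rcond}\Big)^2 (a_k^2 + \sigma b_k).
\end{equation*}
Roughly: the $b_k$ term is handled because $\sigma C'\eps$ can be made $\le (1-1/(2\rcond))^2\sigma$ by taking $\eps$ small; the extra $a_k^2$ coming from $\sigma C' a_k^2$ together with the cross terms must be absorbed into the slack between $(1-1/\rcond)^2$ and $(1-1/(2\rcond))^2$, which forces $\sigma$ to be small enough (so $\sigma C'$ is a small multiple of that slack) and again $\eps$ small so the $O(a_k^3)$, $O(a_k b_k)$ terms are dominated. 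One has to be careful that the $a_k b_k$ cross term is controlled using $b_k \le \eps$ and $a_k b_k \le \tfrac12(a_k^2 + b_k^2)$ so it splits cleanly between the two budgets. Finally, the distance bound follows since $\ltwo{x_k - x_\star}^2 = a_k^2 + b_k^2 \le a_k^2 + \sigma b_k / (\sigma) \cdot \max(1,\ldots)$ — more carefully, $b_k \le \eps$ gives $b_k^2 \le \eps b_k$, so $a_k^2 + b_k^2 \le a_k^2 + \eps b_k \le \max(1,\eps/\sigma)(a_k^2 + \sigma b_k)$, and the right side decays like $(1-1/(2\rcond))^{2k}$.

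The step I expect to be the main obstacle is establishing fact (ii) with the right error structure — namely showing that the Taylor expansion of the \emph{extended} Riemannian gradient $\rgrad f(x) = \proj_x \grad f(x)$ at a point $x_k$ that is off the manifold still produces the true Riemannian Hessian $\rhess f(x_\star)$ as its linearization in the tangential direction, with all the discrepancy (including the $\D\proj_x^\perp$ terms and the contribution of the normal component $v_k$) bundled into genuinely second-order $O(\ltwo{x_k - x_\star}^2)$ remainders controlled via the Lipschitz-Hessian constants $\rhof, \rhoF$ and the conditioning $\gamma_F$. This requires differentiating the projection $\proj_x$ carefully and matching the $\sum_i[\nabla F^{\dagger\sT}\nabla f]_i \proj_x \nabla^2 F_i \proj_x$ term; getting the bookkeeping right so that it yields precisely $\rhess f(x_\star)$ (and not some perturbed operator whose conditioning we don't control) is the delicate part. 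A secondary difficulty is the simultaneous tuning of $(\sigma,\eps)$: one must check the two inequalities are compatible, i.e. that the same small $\sigma$ and $\eps$ make both the $a_k^2$-budget and the $b_k$-budget close, which I would do by first fixing $\sigma$ small relative to the spectral slack $(1-1/\rcond)^2 - (1-1/(2\rcond))^2 > 0$ and then shrinking $\eps$.
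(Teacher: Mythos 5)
Your proposal follows essentially the same route as the paper's proof: the same tangential/normal decomposition via $\proj_{x_\star}$ and $\proj_{x_\star}^\perp$, the same quadratic contraction $b_{k+1}\lesssim a_k^2+b_k^2$ of the normal part coming from the Newton-like term $\nabla F(x)^\dagger F(x)$, the same linearization of the extended Riemannian gradient around $x_\star$ for the tangential part, and the same potential $a_k^2+\sigma b_k$ with $\sigma$ fixed first against the spectral slack and $\eps$ shrunk afterwards. The recursions you combine in the final display are exactly the ones the paper derives, so the argument closes.

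One caveat on the step you flag as the main obstacle: the remainder in the expansion $\rgrad f(x)=\rhess f(x_\star)(x-x_\star)+r(x)$ cannot be made purely $O(\ltwo{x-x_\star}^2)$ as you hope. Since $\nabla f(x_\star)$ has a nonzero normal component and $\proj_x$ varies at first order in $x$, the differential of $x\mapsto\proj_x\nabla f(x)$ at $x_\star$ differs from $\rhess f(x_\star)$ on normal directions (on which $\rhess f(x_\star)$ vanishes identically), so $r(x)$ necessarily contains a term \emph{linear} in $b=\ltwo{\proj_{x_\star}^\perp(x-x_\star)}$; the paper's Lemma~\ref{lemma:riemannian-taylor-expansion} states the bound in exactly the form $\ltwo{r(x)}\le \cra\ltwo{x-x_\star}^2+\crb\, b$. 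Fortunately your displayed recursion $a_{k+1}\le(1-1/\rcond)a_k+C(a_k^2+b_k)$ already carries the linear $b_k$ error, and the potential is linear in $b$ precisely so that this term is absorbed by the $\sigma b_k$ budget, so this correction changes the bookkeeping of the key lemma but not the structure or validity of your argument.
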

{\bf Remark}
Theorem~\ref{theorem:convergence} requires choosing the stepsize
$\eta$ according to the maximum eigenvalue of $\rhess f(x_\star)$,
which might not be known in advance. In practice, one could
implement a line-search (for example as in ~\citep[Section
4.2]{AbsilMaSe09} for Riemannian gradient methods) which would
hopefully achieve the same optimal rate.

\subsection{Global convergence}
\label{sec:global}
Let $\cM_\eps = \{ x : \| F(x) \|_2 \le \eps \}$ denote an
$\eps$-neighborhood of the manifold $\cM$. To show global properties
of SQP algorithm, we make the following additional assumptions:
\begin{assumption}[Global assumptions]\label{ass:global}~
\begin{enumerate}
\item There exists $\eps_0 \ge 0$, such that $\sup_{x \in
    \cM_{\eps_0}} \| \nabla f(x) \|_2 \le \Gf$, and $\inf_{x \in
    \cM_{\eps_0}} f(x) \ge \Lbfeps$.
\item The condition in Assumption \ref{assumption:cq} holds in this
  neighborhood $\cM_{\eps_0}$: $\inf_{x \in
    \cM_{\eps_0}}\sigma_{\min}(\nabla F(x))\ge\minsvF$.
\item Some conditions in Assumption \ref{assumption:smoothness} holds
  globally in $\R^n$: the functions $f$ and $F$ are $C^2$, $f$ has
  Lipschitz constant $\lipf$, and $f$ and $F$ have Lipschitz
  gradients with constants $\smoothf$, $\smoothF$.
\end{enumerate}
\end{assumption}

The following theorem establishes the global convergence of SQP
algorithm with a small constant stepsize.
Our convergence guarantee is provided in terms of the norm of the
extended Riemannian gradient. The proof can be found in
Section~\ref{sec:proof-global}.
\begin{theorem}\label{thm:global_closeness_and_convergence}
There exists constants $K_1, K_2 > 0, \eps_\star > 0$, such that for any $\eps \le \eps_\star$, we initialize $x_0 \in \cM_\eps$, and letting step size to be $\eta = \sqrt{\eps/K_1}$, then each iterates will be close to the manifold, i.e., $\{ x_i \}_{i \in \N} \subseteq \cM_\eps$. Moreover, for any $k \in \N$, we have 
\[
\min_{i \in [k]} \| \rgrad f(x_i) \|_2^2 \le K_2 \{ [f(x_0) -  \Lbfeps]/ (k \sqrt \eps) + \sqrt{\eps} \}. 
\]
\end{theorem}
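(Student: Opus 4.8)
The plan is to mimic a Riemannian descent-lemma argument for the update $x_+ = x - \eta\rgrad f(x) - \grad F(x)^\dagger F(x)$ from~\eqref{equation:delta}, treating the infeasibility term $\grad F(x)^\dagger F(x)$ as a controlled second-order perturbation. The argument splits into two coupled inductions: (i) an \emph{invariance} claim that $\|F(x_k)\|_2 \le \eps$ for all $k$, and (ii) a \emph{sufficient decrease} claim on $f(x_k)$. First I would quantify how infeasibility propagates: writing $F(x_+)$ via a Taylor expansion of $F$ around $x$ and using $\grad F(x)\cdot(x_+-x) = -\eta\,\grad F(x)\rgrad f(x) - F(x) = -F(x)$ (since $\grad F(x)\proj_x = 0$), the linear term cancels $F(x)$ exactly, leaving $\|F(x_+)\|_2 \le \frac{\smoothF}{2}\|x_+-x\|_2^2$. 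Since $\|x_+-x\|_2 \le \eta\Gf + \|F(x)\|_2/\minsvF \le \eta\Gf + \eps/\minsvF$, one gets $\|F(x_+)\|_2 \le C(\eta^2 + \eps^2)$ for a constant $C$ depending on $\smoothF, \Gf, \minsvF$. Choosing $\eta = \sqrt{\eps/K_1}$ makes $\eta^2 = \eps/K_1$, so for $\eps$ small enough and $K_1$ large enough this is $\le \eps$, closing the invariance induction: $x_k \in \cM_\eps$ for all $k$.

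Next I would establish the descent estimate. Using Lipschitzness of $\grad f$ (the global smoothness part of Assumption~\ref{ass:global}), $f(x_+) \le f(x) + \<\grad f(x), x_+-x\> + \frac{\smoothf}{2}\|x_+-x\|_2^2$. Substituting $x_+-x = -\eta\rgrad f(x) - \grad F(x)^\dagger F(x)$ and using $\<\grad f(x), \rgrad f(x)\> = \<\grad f(x), \proj_x\grad f(x)\> = \|\rgrad f(x)\|_2^2$ (since $\proj_x$ is an orthogonal projection), the inner product contributes $-\eta\|\rgrad f(x)\|_2^2 - \<\grad f(x), \grad F(x)^\dagger F(x)\>$. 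The cross term is bounded by $\Gf \cdot \|F(x)\|_2/\minsvF \le \Gf\eps/\minsvF$, and the quadratic term by $\smoothf(\eta^2\Gf^2 + \eps^2/\minsvF^2)$. Collecting, $f(x_+) \le f(x) - \eta\|\rgrad f(x)\|_2^2 + C'(\eps + \eta^2)$ for a constant $C'$; with $\eta^2 = \eps/K_1 \le \eps$ this is $f(x_+) \le f(x) - \eta\|\rgrad f(x)\|_2^2 + C''\eps$. Telescoping over $i = 0, \ldots, k-1$ and using $f(x_k) \ge \Lbfeps$ (valid since $x_k \in \cM_\eps \subseteq \cM_{\eps_0}$ for $\eps \le \eps_0$) gives $\eta\sum_{i=0}^{k-1}\|\rgrad f(x_i)\|_2^2 \le f(x_0) - \Lbfeps + k C''\eps$, hence $\min_{i\in[k]}\|\rgrad f(x_i)\|_2^2 \le [f(x_0)-\Lbfeps]/(k\eta) + C''\eps/\eta$. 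Plugging $\eta = \sqrt{\eps/K_1}$ yields the claimed bound $K_2\{[f(x_0)-\Lbfeps]/(k\sqrt\eps) + \sqrt\eps\}$ after absorbing constants into $K_2$.

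The main obstacle I anticipate is managing the interdependence of the two inductions cleanly: the infeasibility bound $\|F(x_+)\|_2 \le \frac{\smoothF}{2}\|x_+-x\|_2^2$ needs the step-size bound on $\|x_+-x\|_2$, which in turn needs $\|F(x)\|_2 \le \eps$ and $\|\grad f(x)\|_2 \le \Gf$ — and the latter requires $x \in \cM_{\eps_0}$, so one must verify at the outset that $\eps_\star \le \eps_0$ and that all relevant constants ($\minsvF$, the Lipschitz constants) are the uniform ones from Assumption~\ref{ass:global} rather than the purely local ones from Assumptions~\ref{assumption:smoothness}–\ref{assumption:cq}. A secondary subtlety is that $\rgrad f$ here is the \emph{extended} Riemannian gradient defined off the manifold; the identities $\proj_x^2 = \proj_x$ and $\grad F(x)\proj_x = 0$ still hold pointwise for any $x$ with $\grad F(x)$ full rank, so the algebra goes through unchanged, but one should state this explicitly. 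Finally, choosing the order of quantifiers — first fix $K_1$ large enough for invariance, then $\eps_\star$ small enough, then read off $K_2$ — keeps the constant-chasing honest.
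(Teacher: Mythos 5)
Your proposal is correct and follows essentially the same route as the paper's proof, which is likewise split into an invariance lemma (Taylor-expand $F$ at $x_k$, observe the linearized constraint term cancels $F(x_k)$ exactly since $\nabla F(x_k)\rgrad f(x_k)=0$ and $\nabla F(x_k)\nabla F(x_k)^\dagger=\id$, then close the induction with $\eps_1\le\minsvF^2/(2\smoothF)$ and $\eta^2\le\eps_1/(2\smoothF\Gf^2)$) and a descent lemma with an $O(\eps)$ additive error that telescopes to the stated bound. The only cosmetic differences are that the paper uses orthogonality of the two components of $x_{k+1}-x_k$ rather than the triangle inequality, and absorbs the $\smoothf\eta^2\|\rgrad f(x_k)\|_2^2$ term via $\eta\le 1/(2\smoothf)$ rather than via $\|\rgrad f(x_k)\|_2\le\Gf$; neither affects correctness.
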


To minimize the bound in the right hand side, one can choose $\eps = O(1/ k)$, and we get the following corollary. 
\begin{corollary}
There exists constants $\{ K_i \}_{i=1}^4$, such that for any $k \ge K_1$, if we take $\eps = K_2/k$, and initialize on the manifold $\cM$ with step size $\eta = K_3/k^{1/2}$, we have 
\[
\min_{i \in [k]} \| \rgrad f(x_i) \|_2 \le K_4/k^{1/4}.
\] 
\end{corollary}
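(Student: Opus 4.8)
The plan is to simply instantiate Theorem~\ref{thm:global_closeness_and_convergence} with the choice $\eps = K_2'/k$ for a suitable constant $K_2'$ and then bound the resulting expression. First I would observe that the theorem requires $\eps \le \eps_\star$; since we are taking $\eps = K_2'/k$, this holds provided $k \ge K_2'/\eps_\star =: K_1$, which is why the corollary only claims the bound for $k \ge K_1$. With $\eps = K_2'/k$ the step size prescribed by the theorem is $\eta = \sqrt{\eps/K_1'} = \sqrt{K_2'/(K_1' k)} =: K_3/k^{1/2}$, matching the claimed form of $\eta$. Since $x_0$ is initialized on $\cM$ we have $\|F(x_0)\|_2 = 0 \le \eps$, so $x_0 \in \cM_\eps$ and the hypotheses of the theorem are met.

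Next I would plug into the guarantee of Theorem~\ref{thm:global_closeness_and_convergence}: with these choices,
\begin{equation*}
  \min_{i \in [k]} \| \rgrad f(x_i) \|_2^2 \le K_2 \Big\{ \frac{f(x_0) - \Lbfeps}{k \sqrt{\eps}} + \sqrt{\eps} \Big\}.
\end{equation*}
Substituting $\sqrt{\eps} = \sqrt{K_2'}/k^{1/2}$ gives $k\sqrt{\eps} = \sqrt{K_2'}\, k^{1/2}$, so the first term is $(f(x_0)-\Lbfeps)/(\sqrt{K_2'}\,k^{1/2})$ and the second term is $\sqrt{K_2'}/k^{1/2}$. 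Both terms are therefore $O(1/k^{1/2})$, where the hidden constant depends only on $K_2$, $K_2'$, and the quantity $f(x_0) - \Lbfeps$ (which is a fixed finite number under Assumption~\ref{ass:global}, since $f(x_0) \le f(x_0)$ and $\Lbfeps \le \inf_{x\in\cM_{\eps_0}} f(x) \le f(x_0)$ once $x_0 \in \cM \subseteq \cM_{\eps_0}$). Collecting constants, $\min_{i\in[k]} \|\rgrad f(x_i)\|_2^2 \le (K_4')^2/k^{1/2}$ for an appropriate $K_4'$, and taking square roots yields $\min_{i\in[k]} \|\rgrad f(x_i)\|_2 \le K_4'/k^{1/4}$, which is the claim with $K_4 := K_4'$.

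There is no real obstacle here — the corollary is a direct optimization of the two-term bound in $\eps$, balancing $1/(k\sqrt{\eps})$ against $\sqrt{\eps}$, which are equalized precisely when $\eps \asymp 1/k$. The only points requiring a modicum of care are (i) bookkeeping the constants so that the final $K_1,\dots,K_4$ are expressed purely in terms of the constants produced by the theorem and the problem data, and (ii) confirming $f(x_0) - \Lbfeps \ge 0$ and finite so that the square-root step is legitimate; both are immediate from Assumption~\ref{ass:global} and the initialization $x_0 \in \cM$.
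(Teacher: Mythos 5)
Your proposal is correct and takes essentially the same route as the paper, which proves the corollary only by the one-line remark that one should choose $\eps = O(1/k)$ in Theorem~\ref{thm:global_closeness_and_convergence} to balance the two terms $1/(k\sqrt{\eps})$ and $\sqrt{\eps}$ --- exactly the substitution you carry out. The only (shared) caveat is that the resulting $K_4$ depends on $f(x_0)-\Lbfeps$, a dependence the paper likewise leaves implicit in the phrase ``there exist constants.''
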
 {\bf Remark on the stationarity measure} While our
global convergence is measured the extended Riemannian gradient
$\ltwo{\rgrad f(x_i)}$ on the infeasible iterate $x_i$'s, one could
construct nearby feasible points with small Riemannian gradient via a
straightforward perturbation argument.



\section{Proof of Theorem~\ref{theorem:convergence}}\label{sec:proof}
\subsection{Riemannian Taylor expansion}
\label{sec:RTE}
The proof of Theorem~\ref{theorem:convergence} relies on a particular
expansion of the Riemannian gradient off the manifold, which we state
as follows.
\begin{lemma}[First-order expansion of Riemannian gradient]
  \label{lemma:riemannian-taylor-expansion}
  There exist constants $\eps_0>0$ and $\cra, \crb>0$ such that for all
  $x\in\ball(x_\star,\eps_0)$,
  \begin{equation}
    \rgrad f(x) = \rhess f(x_\star) (x - x_\star) +
    r(x),
  \end{equation}
  where the remainder term $r(x)$ satisfies the error bound
  \begin{equation}
    \| r(x) \|_2 \le \cra \| x-x_\star \|_2^2 +
    \crb \| \proj_{x_\star}^\perp(x-x_\star) \|_2.
  \end{equation}
\end{lemma}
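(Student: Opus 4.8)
The plan is to Taylor-expand both factors of the extended Riemannian gradient $\rgrad f(x) = \proj_x \grad f(x)$ around $x_\star$, keeping terms up to first order in $x - x_\star$ and bounding the rest. Write $u = x - x_\star$ and split it as $u = \proj_{x_\star} u + \proj_{x_\star}^\perp u$; the crucial bookkeeping is that any term carrying a factor $\proj_{x_\star}^\perp u$ goes into the remainder (which is allowed an $O(\|\proj_{x_\star}^\perp u\|_2)$ contribution), while terms that are genuinely quadratic in $u$ also go into the remainder via the $\cra \|u\|_2^2$ allowance. First I would expand $\grad f(x) = \grad f(x_\star) + \nabla^2 f(x_\star) u + O(\rhof \|u\|_2^2)$ using the Lipschitz-Hessian bound from Assumption~\ref{assumption:smoothness}. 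Separately I would expand the projector: since $x \mapsto \proj_x$ is $C^1$ on $\ball(x_\star,\delta)$ (because $\nabla F(x)\nabla F(x)^\sT$ is invertible there by Assumption~\ref{assumption:cq}), we get $\proj_x = \proj_{x_\star} + \D\proj_x[u]\big|_{x_\star} + O(\|u\|_2^2)$, with the constant in the remainder controlled by $\smoothF, \rhoF, \minsvF, \lipF$.

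Next I would multiply the two expansions and collect terms. The product has: (i) the constant term $\proj_{x_\star}\grad f(x_\star) = \rgrad f(x_\star) = 0$, which vanishes because $x_\star$ is a local minimizer on $\cM$; (ii) the linear term $\proj_{x_\star}\nabla^2 f(x_\star) u + (\D\proj_x[u]|_{x_\star})\grad f(x_\star)$; (iii) quadratic-and-higher terms, which are $O(\|u\|_2^2)$ and absorbed into the $\cra$ part of the remainder. For term (ii) I want to show it equals $\rhess f(x_\star) u$ up to something of size $O(\|\proj_{x_\star}^\perp u\|_2)$. Here I would use two facts: first, $\proj_{x_\star}\nabla^2 f(x_\star) u = \proj_{x_\star}\nabla^2 f(x_\star)\proj_{x_\star} u + \proj_{x_\star}\nabla^2 f(x_\star)\proj_{x_\star}^\perp u$, and the second piece is $O(\|\proj_{x_\star}^\perp u\|_2)$, hence goes to the remainder; second, the derivative-of-projector term needs to be matched against $-\sum_i [\nabla F(x_\star)^{\dagger\sT}\nabla f(x_\star)]_i \proj_{x_\star}\nabla^2 F_i(x_\star)\proj_{x_\star} u$. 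This is exactly the identity already used in the excerpt to derive the formula for $\rhess f$ on the manifold — namely that $\proj_x(\D\proj_x^\perp[u])\grad f = \sum_i [\nabla F(x)^{\dagger\sT}\nabla f]_i \proj_x \nabla^2 F_i \proj_x u$ for tangent $u$ — so the main work is checking it survives off-manifold and after dropping the $\proj_{x_\star}^\perp u$ components at the cost of another $O(\|\proj_{x_\star}^\perp u\|_2)$ error.

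The step I expect to be the main obstacle is the careful handling of the derivative of the projector $\D\proj_x[u]$ and its interaction with $\grad f(x_\star)$. One must differentiate $\proj_x = \id_n - \nabla F(x)^\sT(\nabla F(x)\nabla F(x)^\sT)^{-1}\nabla F(x)$ — involving the derivative of a matrix inverse — and then show that, modulo terms proportional to $\|\proj_{x_\star}^\perp u\|_2$, the result collapses to the second-derivative-of-$F$ term appearing in $\rhess f(x_\star)$. A clean way to organize this: note $\nabla F(x_\star)\proj_{x_\star} = 0$ and $\proj_{x_\star}\nabla F(x_\star)^\sT = 0$, so when $\D\proj_x[u]$ is sandwiched appropriately, only the part of $\D(\nabla F)[u] = \nabla^2 F(x_\star)[u,\cdot]$ acting through the tangent component of $u$ matters, and the inverse-derivative terms drop out because they come pre- or post-multiplied by a projector that kills $\nabla F(x_\star)^\sT$ or $\nabla F(x_\star)$. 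Once that algebra is done, collecting all the discarded pieces gives $\crb$ as a sum of bounds on $\|\nabla^2 F(x_\star)\|_{\op}$, $\|\nabla F(x_\star)^{\dagger\sT}\nabla f(x_\star)\|_2 \le \lipf/\minsvF$, $\|\nabla^2 f(x_\star)\|_{\op} \le \smoothf$, and $\minsvF$; and $\cra$ collects the genuinely quadratic remainders from both Taylor expansions, controlled by $\rhof$, $\rhoF$, $\smoothf$, $\smoothF$, $\lipf$, $\lipF$, $\minsvF$, with $\eps_0$ chosen small enough (in terms of $\delta$ and $\minsvF$) that all expansions are valid.
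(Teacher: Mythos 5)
Your route is genuinely different from the paper's: you expand the product $\proj_x \nabla f(x)$ directly as (Taylor expansion of the projector) $\times$ (Taylor expansion of the gradient), whereas the paper first replaces $x$ by $x_p = x_\star + \proj_{x_\star}(x-x_\star)$, pays $\crb\|\proj_{x_\star}^\perp(x-x_\star)\|_2$ once via the Lipschitz bound on the extended Riemannian gradient (Lemma~\ref{lemma:riemannian-gradient-lipschitz}), and then only ever analyzes $\proj_{x_\star}\rgrad f(x_p)$, having shown \emph{a priori} that the normal component is quadratically small: $\proj_{x_\star}^\perp \rgrad f(x_p) = \proj_{x_\star}^\perp\proj_{x_p}\rgrad f(x_p)$, and $\|\proj_{x_\star}^\perp\proj_{x_p}\|_{\op}\le \smoothP\|x_p-x_\star\|_2$ while $\|\rgrad f(x_p)\|_2\le \smoothr\|x_p-x_\star\|_2$ since $\rgrad f(x_\star)=0$. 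That one observation is what lets the paper avoid the hard part of your computation.

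The gap in your proposal is precisely the step the paper's trick sidesteps: the \emph{normal} component of your linear term (ii). Writing $\lambda_\star = \nabla F(x_\star)^{\dagger\sT}\nabla f(x_\star)$, so that $\nabla f(x_\star)=\nabla F(x_\star)^\sT\lambda_\star$, and differentiating $\proj_x^\perp = \nabla F(x)^\sT(\nabla F(x)\nabla F(x)^\sT)^{-1}\nabla F(x)$, the term $\D\proj_{x_\star}[u]\,\nabla f(x_\star)$ splits into three pieces: one equal to $-\sum_i\lambda_{\star,i}\nabla^2F_i(x_\star)u$, and two pieces (from differentiating the inverse and the rightmost $\nabla F$) that lie entirely in $\mathrm{range}(\nabla F(x_\star)^\sT)$. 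Your claim that the latter two ``drop out because they come pre- or post-multiplied by a projector that kills $\nabla F(x_\star)^\sT$ or $\nabla F(x_\star)$'' is not correct: in $\rgrad f(x)=\proj_x\nabla f(x)$ nothing pre-multiplies $\D\proj_{x_\star}[u]$ by $\proj_{x_\star}$, and for a \emph{tangent} direction $u$ these pieces are genuinely of order $\|u\|_2$, normal-valued, and hence cannot be absorbed into either $\cra\|u\|_2^2$ or $\crb\|\proj_{x_\star}^\perp u\|_2$. The lemma is still true because these normal first-order contributions cancel exactly against $-\proj_{x_\star}^\perp\sum_i\lambda_{\star,i}\nabla^2F_i(x_\star)u$ (a short computation using $\D[(\nabla F\nabla F^\sT)^{-1}][u] = -(\nabla F\nabla F^\sT)^{-1}\D[\nabla F\nabla F^\sT][u](\nabla F\nabla F^\sT)^{-1}$), but your write-up neither identifies nor performs this cancellation. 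To repair the argument, either carry out that cancellation explicitly, or adopt the paper's shortcut: bound $\|\proj_{x_\star}^\perp\rgrad f(x)\|_2\le\smoothP\smoothr\|x-x_\star\|_2^2$ first, and then run your expansion only on $\proj_{x_\star}\rgrad f(x)$, where the two offending pieces really are killed by the left factor $\proj_{x_\star}$ and your tangent-part algebra goes through as written.
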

Lemma~\ref{lemma:riemannian-taylor-expansion} extends
classical Riemannian Taylor expansion~\citep[Section
7.1]{AbsilMaSe09} to points off the manifold, where the remainder
term contains an additional first-order error. While the error term
is linear in $\| \proj_{x_\star}^\perp(x-x_\star) \|_2$, this
expansion is particularly suitable when $\| \proj_{x_\star}^\perp(x-x_\star) \|_2$ is on the order
of $ \|\proj_{x_\star}(x-x_\star) \|_2^2$, which results in a
quadratic bound on $\ltwo{r(x)}$. 

In the proof of Theorem~\ref{theorem:convergence}, $\rgrad f(x)$ mostly
appears through its squared norm and inner product with
$x-x_\star$. We summarize the expansion of these terms in the
following corollary.
\begin{corollary}
  \label{corollary:rgrad-expansions}
  There exist constants $\eps_0>0$ and $\cra,\crb>0$ such that the
  following hold. For any $\eps\le \eps_0$, $x\in\ball(x,\eps)$,
  \begin{align*}
    & \<\rgrad f(x), x-x_\star\> = \<x-x_\star, \rhess
      f(x_\star)(x-x_\star)\> + R_1, \\
    & \| \rgrad f(x)\|_2^2 = \<x-x_\star, [\rhess
      f(x_\star) ]^2(x-x_\star)\> + R_2,
  \end{align*}
  where
  \begin{align*}
    \max\set{|R_1|, |R_2|}
    \le \eps (\cra\| \proj_{x_\star}(x-x_\star)\|_2^2 +
      \crb \| \proj_{x_\star}^\perp(x-x_\star)\|_2 ).
  \end{align*}
\end{corollary}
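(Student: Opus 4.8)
The plan is to derive Corollary~\ref{corollary:rgrad-expansions} directly from Lemma~\ref{lemma:riemannian-taylor-expansion}, treating $\rgrad f(x) = \rhess f(x_\star)(x-x_\star) + r(x)$ as a noisy linear term and expanding the two quadratic expressions. Write $H = \rhess f(x_\star)$ and $v = x - x_\star$ throughout, and note that $\|v\|_2 \le \eps_0$ can be taken as small as we like.

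For the first identity, I would compute $\langle \rgrad f(x), v\rangle = \langle Hv, v\rangle + \langle r(x), v\rangle$, so $R_1 = \langle r(x), v\rangle$ and $|R_1| \le \|r(x)\|_2\|v\|_2$. Plugging in the bound from the lemma gives $|R_1| \le \|v\|_2\big(\cra\|v\|_2^2 + \crb\|\proj_{x_\star}^\perp v\|_2\big)$. Now $\|v\|_2^2 = \|\proj_{x_\star}v\|_2^2 + \|\proj_{x_\star}^\perp v\|_2^2$, and since $\|v\|_2 \le \eps$ we can absorb the extra factor: $\|v\|_2^3 \le \eps\|v\|_2^2$ and $\|v\|_2\cdot\|\proj_{x_\star}^\perp v\|_2 \le \eps\|\proj_{x_\star}^\perp v\|_2$. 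The cross term $\eps\|\proj_{x_\star}^\perp v\|_2^2 \le \eps^2\|\proj_{x_\star}^\perp v\|_2 \le \eps\|\proj_{x_\star}^\perp v\|_2$ for $\eps\le 1$ (or one simply uses $\|\proj_{x_\star}^\perp v\|_2 \le \eps$ to fold it into the linear term). Collecting terms yields $|R_1| \le \eps\big(\cra'\|\proj_{x_\star}v\|_2^2 + \crb'\|\proj_{x_\star}^\perp v\|_2\big)$ after renaming constants.

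For the second identity, expand $\|\rgrad f(x)\|_2^2 = \|Hv\|_2^2 + 2\langle Hv, r(x)\rangle + \|r(x)\|_2^2$, and identify $\langle v, H^2 v\rangle = \|Hv\|_2^2$ since $H$ is symmetric. Thus $R_2 = 2\langle Hv, r(x)\rangle + \|r(x)\|_2^2$. Bounding the cross term by $2\|H\|_{\op}\|v\|_2\|r(x)\|_2$ reduces it to the same shape handled above (here $\|H\|_{\op} = \hessmax$ is a fixed constant). For the quadratic-in-$r$ term, $\|r(x)\|_2^2 \le 2\cra^2\|v\|_2^4 + 2\crb^2\|\proj_{x_\star}^\perp v\|_2^2$; the first piece is $\le \eps^2\|v\|_2^2$-type and the second is $\le \eps\|\proj_{x_\star}^\perp v\|_2$ after pulling out one factor of $\|\proj_{x_\star}^\perp v\|_2 \le \eps$. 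After combining everything and possibly shrinking $\eps_0$ and renaming, both $|R_1|$ and $|R_2|$ obey the stated bound with a common pair of constants $(\cra,\crb)$.

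There is no serious obstacle here — the corollary is essentially bookkeeping on top of the lemma. The only point requiring a little care is making sure the cross term $\eps\|\proj_{x_\star}^\perp v\|_2^2$ and the quartic term $\|v\|_2^4$ are genuinely dominated by the two allowed terms $\eps\|\proj_{x_\star}v\|_2^2$ and $\eps\|\proj_{x_\star}^\perp v\|_2$; this works precisely because $\eps_0$ can be chosen small (so $\|\proj_{x_\star}^\perp v\|_2 \le \|v\|_2 \le \eps_0$ lets us trade a square for a single power at the cost of the small constant $\eps_0$), and because $\|v\|_2^2 = a^2 + b^2$ splits cleanly along the two projections. I would state the constants $\cra,\crb$ in the corollary as possibly different from (but derived from) those in the lemma, and fix a single $\eps_0$ small enough that all the absorptions above hold simultaneously.
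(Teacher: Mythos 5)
Your proposal is correct and follows essentially the same route as the paper: identify $R_1=\<r(x),x-x_\star\>$ and $R_2=2\<\rhess f(x_\star)(x-x_\star),r(x)\>+\|r(x)\|_2^2$, bound both via the remainder estimate from Lemma~\ref{lemma:riemannian-taylor-expansion}, and absorb the higher-order terms using $\|x-x_\star\|_2\le\eps$ and $\|\proj_{x_\star}^\perp(x-x_\star)\|_2\le\eps_0$ after splitting $\|x-x_\star\|_2^2$ along the two projections. The only cosmetic difference is that you bound $\|r(x)\|_2^2$ by $2\cra^2\|v\|_2^4+2\crb^2\|\proj_{x_\star}^\perp v\|_2^2$ while the paper keeps the full expanded square including the cross term; both yield the stated bound after renaming constants.
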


\subsection{Proof of Theorem~\ref{theorem:convergence}}
The following perturbation bound (see, e.g.~\cite{Sun01}) for
projections is useful in the proof.
\begin{lemma}
  \label{lemma:projection-perturbation-bound}
  For $x_1,x_2 \in \ball(x_\star, \delta)$, we have
  \begin{equation*}
    \begin{aligned}
      &\opnorm{\proj_{x_1} - \proj_{x_2}} \le
      \smoothP \dl x_1-x_2 \dl_2 \\
      & \dl \proj_{x_1} \proj_{x_2}^\perp \dl_{\op} \le \smoothP  \dl
      x_1-x_2 \dl_2.
    \end{aligned}
  \end{equation*}
  where $\smoothP = 2\smoothF/\minsvF$.
\end{lemma}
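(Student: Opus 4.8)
The plan is to reduce the lemma to a single elementary fact: if $\nabla F(x_2)v=0$ and $x_1$ is close to $x_2$, then $\nabla F(x_1)v$ is small, with smallness controlled by the Lipschitz constant of $\nabla F$. This lets us avoid invoking general perturbation theory for orthogonal projectors onto moving subspaces.

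First I would fix the algebra. For any $x\in\ball(x_\star,\delta)$, Assumption~\ref{assumption:cq} makes $\nabla F(x)$ full row rank, so (by the formulas in Section~\ref{sec:geometry}) $\proj_x$ is the orthogonal projector onto $T_x\cM=\ker\nabla F(x)$ and $\proj_x^\perp=\id_n-\proj_x=\nabla F(x)^\dagger\nabla F(x)$ is the orthogonal projector onto the row space of $\nabla F(x)$; in particular both are symmetric, $\opnorm{\proj_x^\perp}\le1$, and $\proj_x\proj_x^\perp=0$. Substituting $\proj_{x_i}^\perp=\id_n-\proj_{x_i}$ and expanding gives the identity
\begin{equation*}
  \proj_{x_1}-\proj_{x_2}=\proj_{x_1}\proj_{x_2}^\perp-\proj_{x_1}^\perp\proj_{x_2}.
\end{equation*}
By the triangle inequality it then suffices to show that each of $\opnorm{\proj_{x_1}^\perp\proj_{x_2}}$ and $\opnorm{\proj_{x_1}\proj_{x_2}^\perp}$ is at most $(\smoothF/\minsvF)\ltwo{x_1-x_2}$: summing the two bounds gives the first claim with $\smoothP=2\smoothF/\minsvF$, and for the second claim one uses $\proj_{x_1}\proj_{x_2}^\perp=(\proj_{x_1}-\proj_{x_2})\proj_{x_2}^\perp$ (since $\proj_{x_2}\proj_{x_2}^\perp=0$), whence $\opnorm{\proj_{x_1}\proj_{x_2}^\perp}\le\opnorm{\proj_{x_1}-\proj_{x_2}}\le\smoothP\ltwo{x_1-x_2}$.

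Next I would bound the two cross terms. For a unit vector $u$ put $v=\proj_{x_2}u$, so that $\ltwo v\le1$ and $\nabla F(x_2)v=0$; since $\nabla F$ is $\smoothF$-Lipschitz (in operator norm) on $\ball(x_\star,\delta)$ by Assumption~\ref{assumption:smoothness}, we get $\ltwo{\nabla F(x_1)v}=\ltwo{(\nabla F(x_1)-\nabla F(x_2))v}\le\smoothF\ltwo{x_1-x_2}$. Combining this with $\proj_{x_1}^\perp v=\nabla F(x_1)^\dagger\nabla F(x_1)v$ and $\opnorm{\nabla F(x_1)^\dagger}=1/\sigma_{\min}(\nabla F(x_1))\le1/\minsvF$ yields $\ltwo{\proj_{x_1}^\perp\proj_{x_2}u}\le(\smoothF/\minsvF)\ltwo{x_1-x_2}$, and taking the supremum over $u$ bounds $\opnorm{\proj_{x_1}^\perp\proj_{x_2}}$ as desired. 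For $\opnorm{\proj_{x_1}\proj_{x_2}^\perp}$ I would use that the operator norm is transpose-invariant and the projectors are symmetric, so $\opnorm{\proj_{x_1}\proj_{x_2}^\perp}=\opnorm{\proj_{x_2}^\perp\proj_{x_1}}$, which is bounded by the identical argument with the roles of $x_1$ and $x_2$ exchanged. This settles both inequalities.

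I do not anticipate a genuine obstacle here; the only thing requiring care is keeping all constants local to $\ball(x_\star,\delta)$ and recognizing that the textbook route to such estimates goes through delicate projector-perturbation bounds (hence references such as~\cite{Sun01}), whereas the identity above reduces everything to two one-line computations with explicit constants coming straight from Assumptions~\ref{assumption:smoothness} and~\ref{assumption:cq}. If one wished to optimize the constant, the cross terms $\proj_{x_1}\proj_{x_2}^\perp$ and $\proj_{x_1}^\perp\proj_{x_2}$ have mutually orthogonal ranges, so they add in quadrature and $2\smoothF/\minsvF$ could be replaced by $\sqrt{2}\,\smoothF/\minsvF$; the stated value is more than sufficient for the downstream arguments.
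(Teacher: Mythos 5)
Your proof is correct. Note that the paper does not actually prove Lemma~\ref{lemma:projection-perturbation-bound}; it simply cites a standard projector-perturbation result (\cite{Sun01}), so your argument is a genuinely different --- and self-contained --- route. The key identity $\proj_{x_1}-\proj_{x_2}=\proj_{x_1}\proj_{x_2}^\perp-\proj_{x_1}^\perp\proj_{x_2}$ is standard and correctly derived, and your bound on each cross term is sound: for a unit vector $u$ and $v=\proj_{x_2}u$ one has $\nabla F(x_2)v=0$, hence $\ltwo{\proj_{x_1}^\perp v}=\ltwo{\nabla F(x_1)^\dagger(\nabla F(x_1)-\nabla F(x_2))v}\le(\smoothF/\minsvF)\ltwo{x_1-x_2}$, using $\opnorm{\nabla F(x_1)^\dagger}=1/\sigma_{\min}(\nabla F(x_1))\le 1/\minsvF$ from Assumption~\ref{assumption:cq}; the symmetric-transpose trick handles the other term, and the second stated inequality follows either directly (with the better constant $\smoothF/\minsvF$) or via $\proj_{x_1}\proj_{x_2}^\perp=(\proj_{x_1}-\proj_{x_2})\proj_{x_2}^\perp$. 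What your approach buys is explicit constants traced directly to Assumptions~\ref{assumption:smoothness} and~\ref{assumption:cq} without appealing to external perturbation theory; what the citation route buys is brevity and, in principle, sharper constants (your own closing remark that the two cross terms have orthogonal ranges, so the bound improves to $\sqrt{2}\,\smoothF/\minsvF$, is also correct --- indeed for orthogonal projectors $\opnorm{\proj_{x_1}-\proj_{x_2}}=\max\{\opnorm{\proj_{x_1}\proj_{x_2}^\perp},\opnorm{\proj_{x_1}^\perp\proj_{x_2}}\}$, so even $\smoothF/\minsvF$ would do). The stated constant $\smoothP=2\smoothF/\minsvF$ is of course sufficient for everything downstream.
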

We now prove the main theorem.

\paragraph{Step 1: A trivial bound on $\| x_+-x_\star\|_2^2$. }
Consider one iterate of the algorithm $x\to x_+$, whose closed-form
expression is given in~\eqref{equation:delta}. We have $x_+ =x
+\Delta$, where
\begin{equation}
\Delta = -\eta \cdot \rgrad f(x) - \nabla F(x)^\dagger F(x). 
\end{equation}
We would like to relate $\dl x_+ - x_\star \dl_2$ with $\dl x - x_\star \dl_2$. 
Observe that $\rgrad f(x_\star)=0$, we have
\begin{equation*}
  \dl \rgrad f(x) \dl_2 = \dl \rgrad f(x) - \rgrad f(x_\star) \dl_2 \le
  \smoothr \dl x-x_\star \dl_2.
\end{equation*}
Observe that $ F(x_\star) = 0$, we have
\begin{equation*}
\begin{aligned}
  &\quad \dl \nabla F(x)^\dagger F(x) \dl_2 = \dl \nabla F(x)^\dagger
  (F(x) - F(x_\star)) \dl_2 \\
  & \le \dl \nabla F(x)^\dagger \dl_{\op} \dl F(x) - F(x_\star) \dl_2
  \le (\smoothF/\minsvF) \dl x - x_\star \dl_2. 
  \end{aligned}
\end{equation*}
Accordingly, we have
\begin{align*}
  & \quad \dl x_+-x_\star \dl_2  \\
  & \le \dl x-x_\star \dl_2 + \eta \dl
    \rgrad f(x) \dl_2 + \dl \nabla F(x)^\dagger F(x) \dl_2  \\
  & \le [1 + \eta \smoothr + (\smoothF/\minsvF)] \dl x-x_\star \dl_2.
\end{align*}
Hence, for any stepsize $\eta$, letting $\cd= [1 + \eta \smoothr + (\smoothF/\minsvF)]^2$, for $x \in \ball(x_\star, \delta)$, we have
\begin{equation} \label{equation:distance-growth}
  \dl x_+-x_\star \dl_2^2 \le \cd \dl x-x_\star\dl_2^2. 
\end{equation}

\paragraph{Step 2: Analyze normal and tangent distances. }
This is the key step of the proof.
We look into the \emph{normal} direction and the \emph{tangent} direction separately. The intuition
for this process is that the normal part of $x-x_\star$ is a measure of feasibility, and as we will see, converges much more quickly.

Now we look at equation $x_+-x_\star=x-x_\star+\Delta$. Multiplying it by $\proj_x$ and
$\proj_x^\perp$ gives
\begin{align}
  &  \proj_x^\perp (x_+-x_\star) = \proj_x^\perp  (x-x_\star) + \proj_x^\perp \Delta =
    \proj_x^\perp  (x-x_\star) -  \nabla F(x)^\dagger
    F(x), \label{equation:px-perp} \\
  & \proj_x (x_+-x_\star) = \proj_x (x-x_\star) + \proj_x \Delta = \proj_x (x-x_\star) - \eta
    \cdot \rgrad f(x). \label{equation:px}
\end{align}
We now take squared norms on both equalities and bound the
growth. Define the normal and tangent distances as
\begin{equation}
  a = \dl \proj_{x_\star} (x-x_\star)\dl_2,~~~
  b = \dl \proj_{x_\star}^\perp (x-x_\star) \dl_2 ,
\end{equation}
and $(a_+,b_+)$ similarly for $x_+$. Note that the definitions of $a$,
$b$ use the projection at $x_\star$, so they are slightly different
from quantities~\eqref{equation:px-perp} and~\eqref{equation:px}.

From now on, we assume
that $\| x-x_\star\|_2\le\eps_0$, and $\eps_0$ is sufficiently small
such that~\eqref{equation:distance-growth} holds. The requirements on
$\eps_0$ will later be tightened when necessary. 

\subparagraph{The normal direction. }
We have
\[
\begin{aligned}
  & \quad \dl \proj_x^\perp (x_+ - x_\star)\dl_2^2 \dl \proj_x^\perp (x - x_\star) \dl_2^2 + 2 \< x - x_\star,
  \proj_x^\perp \Delta \> + \dl \proj_x^\perp \Delta \dl_2^2 \\ 
  &= \dl \proj_x^\perp (x - x_\star) \dl_2^2 - 2 \< x - x_\star,
  \nabla F(x)^\dagger F(x) \> + \<F(x), (\nabla F(x) \nabla
  F(x)^\sT)^{-1} F(x) \> \\ 
  &= \dl \proj_x^\perp (x - x_\star) \dl_2^2 - 2\big\<\nabla
  F(x)(x-x_\star), (\nabla F(x)\nabla F(x)^\sT)^{-1}(\nabla
  F(x)(x-x_\star)+r(x))\big\> \\
  & \quad\quad+ \big\<\nabla
  F(x)(x-x_\star)+r(x), (\nabla F(x)\nabla F(x)^\sT)^{-1}(\nabla
  F(x)(x-x_\star)+r(x))\big\> \\
  &= \<r(x), (\nabla F(x)\nabla F(x)^\sT)^{-1}r(x)\>, 
\end{aligned}
\]
where $r(x)=F(x) - \nabla F(x)(x-x_\star)$. By the smoothness of function $F$, we have
\[
\dl r(x) \dl_2 \le  \smoothF/2 \cdot \dl x-x_\star \dl_2^2. 
\]
Accordingly, we get
\begin{equation*}
\begin{aligned}
  & \quad \dl \proj_x^\perp(x_+-x_\star) \dl_2^2 \\
  & \le
  \smoothF^2/(4\minsvF^2) \cdot \dl x-x_\star \dl_2^4 =
  \smoothF^2/(4\minsvF^2) \cdot [\dl 
  \proj_{x_\star}^\perp(x-x_\star) \dl_2^2 + \dl
  \proj_{x_\star}(x-x_\star)\dl_2^2]^2\\ 
  & =  \smoothF^2/(4\minsvF^2) \cdot (a^2+b^2)^2.
  \end{aligned}
\end{equation*}
Applying the perturbation bound on projections
(Lemma~\ref{lemma:projection-perturbation-bound}), we get 
\begin{equation}
  \label{equation:bound-normal}
  \begin{aligned}
    &\quad b_+ = \dl \proj_{x_\star}^\perp (x_+ - x_\star) \dl_2 \\
    & \le
    \dl \proj_x^\perp(x_+-x_\star) \dl_2 +
    \dl \proj_{x}-\proj_{x_\star} \dl_{\op} \dl x_+-x_\star \dl_2 \\
    &\le \smoothF/(2\minsvF) \cdot (a^2+b^2) +
    \smoothP \dl x-x_\star \dl_2 \dl x_+-x_\star\dl_2 \\
    &\le \smoothF/(2\minsvF) \cdot (a^2+b^2) +
    \smoothP\cd \cdot \dl x-x_\star\dl_2^2 \\
    &= \underbrace{[ \smoothF/(2\minsvF) + \smoothP\cd ]}_{\cb} \cdot
    (a^2+b^2) = \cb(a^2+b^2).
  \end{aligned}
\end{equation}

\subparagraph{The tangent direction. }
We have
\begin{equation*}
\begin{aligned}
  &\quad \dl\proj_x (x_+-x_\star) \dl_2^2 \\
  &=  \dl \proj_x (x - x_\star) \dl_2^2 + 2 \< x - x_\star, \proj_x
  \Delta \> + \dl \proj_x \Delta \dl_2^2\\
  &= \dl\proj_x (x-x_\star)\dl_2^2 - 2\eta \cdot \<\rgrad f(x),
  x - x_\star\> + \eta^2 \cdot \dl \rgrad f(x)\dl_2^2.
\end{aligned}
\end{equation*}
Applying Lemma~\ref{lemma:projection-perturbation-bound}, we get that
for any vector $v$,
\begin{equation*}
  \begin{aligned}
    \vert \dl \proj_x v \dl_2^2 - \dl \proj_{x_\star} v \dl_2^2 \vert
    = \vert \<v,(\proj_{x_\star}-\proj_x)v\> \vert \le \smoothP \dl
    x-x_\star \dl_2 \cdot \dl v\dl_2^2.
  \end{aligned}
\end{equation*}
Applying this to vectors $x_+-x_\star$ and $x-x_\star$ gives
\begin{equation}
  \label{equation:aplus-bound}
  \begin{aligned}
    a_+^2
    & \le a^2 - 2\eta\<\rgrad f(x),
    x -x_\star\> + \eta^2\dl \rgrad f(x) \dl_2^2  \\
    & \quad + \smoothP(\dl x-x_\star\dl_2^3 +
    \dl x-x_\star\dl_2 \dl x_+ -x_\star\dl_2^2)  \\
    & \le a^2 - 2\eta\<\rgrad f(x),
    x -x_\star\> + \eta^2\dl \rgrad f(x) \dl_2^2  \\
    & \quad + \smoothP(1+\cd)\dl x-x_\star
    \dl_2^3. 
  \end{aligned}
\end{equation}
Applying Corollary~\ref{corollary:rgrad-expansions}, and
note that $\rhess f(x_\star)=\proj_{x_\star}\rhess
f(x_\star)\proj_{x_\star}$ by the property of the Riemannian Hessian,
we get
\begin{align*}
  & \quad a_+^2 \le a^2 - 2\eta\<x-x_\star, \rhess
    f(x_\star)(x-x_\star)\> + 
   \eta^2\<x-x_\star, (\rhess f(x_\star) )^2(x-x_\star)\> \\ 
  & \quad\quad\quad + (-2\eta R_1 + \eta^2 R_2) + \eps_0 C(a^2+b^2) \\
  & \quad\quad = \<\proj_{x_\star}(x-x_\star), (\id -\eta\rhess
    f(x_\star))^2\proj_{x_\star}(x-x_\star)\> + (-2\eta R_1 +
    \eta^2 R_2) + \eps_0 C(a^2+b^2),
\end{align*}
where the remainders are bounded as
\begin{equation*}
  \begin{aligned}
    \max\set{|R_1|, |R_2|} &\le
    \eps_0\big(\cra\| \proj_{x_\star}(x-x_\star)\|_2^2 +
    \crb\| \proj_{x_\star}^\perp(x-x_\star)\|_2 \big) \\
    & = \eps_0(\cra a^2 + \crb b).
  \end{aligned}
\end{equation*}
Choosing the stepsize as $\eta=1/\lambda_{\max}(\rhess f(x_\star))$,
we have $\id -\eta\rhess f(x_\star) \preceq (1-1/\rcond) \id$. For this choice of
$\eta$, using the above bound for $R_1,R_2$, we get that there exists
some constant $C_1,C_2$ such that
\begin{equation}
  \label{equation:bound-tangent}
  a_+^2 \le
  \Big(1-\frac{1}{\rcond}\Big)^2a^2 + \eps_0(C_1 a^2 + C_2 b).
\end{equation}

\subparagraph{Putting together. }
Let $\sigma > 0$ be a constant to be
determined. Looking at the quantity $a_+^2+ \sigma b_+$, by the
bounds~\eqref{equation:bound-normal}
and~\eqref{equation:bound-tangent} we have
\begin{align*}
  & \quad a_+^2 + \sigma b_+ \\
  & \le \Big(1-\frac{1}{\rcond}\Big)^2a^2
    + \eps_0(C_1a^2+C_2b) + \sigma\cb(a^2+b^2) \\
  & \le \Big( \Big(1-\frac{1}{\rcond}\Big)^2 + \eps_0 C_1 +
    \sigma\cb \Big)a^2 + \eps_0(C_2+ \sigma\cb)b.
\end{align*}
The last inequality is by rearranging the terms and by the assumption that $b = \dl \proj_{x_\star}^\perp (x-x_\star) \dl_2 \le \eps_0$. 
Then, we would like to choose $\sigma$ and $\eps_0$ sufficiently small such that
\begin{align}
\Big(1-\frac{1}{\rcond}\Big)^2 + \eps_0 C_1 +
    \sigma\cb \le& \Big( 1 - \frac{1}{2\rcond} \Big)^2, \label{eqn:thm1_margin1}\\
\eps_0(C_2+ \sigma\cb) \le& \sigma.\label{eqn:thm1_margin2}
\end{align}
This can be obtained by first choosing $\sigma$ small to satisfy Eq. (\ref{eqn:thm1_margin1}) leaving a small margin for the choice of $\eps_0$, then choosing $\eps_0$ small to satisfy both Eq. (\ref{eqn:thm1_margin1}) and (\ref{eqn:thm1_margin2}). With this choice of $\sigma$ and $\eps_0$, we have
\begin{equation*}
  a_+^2 + \sigma b_+ \le \Big( 1 - \frac{1}{2\rcond} \Big)^2(a^2 + 
  \sigma b). 
\end{equation*}

\paragraph{Step 3: Connect the entire iteration path. }
Consider iterates $x_k$ of the first-order
algorithm~\eqref{algorithm:constrained}. Initializing $x^0$
sufficiently close to $x_\star$, the descent on $a_k^2+\sigma b_k$
ensures $a_k^2+b_k^2\le\eps_0^2$. Thus we chain this analysis on
$(x, x_+)=(x_k,x_{k+1})$ and get that $a_k^2+\sigma b_k$ converges
linearly with rate $1-1/(2\rcond)$.  This in turn implies the bound
$a_k^2+b_k^2\le O((1-1/(2\rcond))^k)$ by observing that
$a_k^2+b_k^2\le C(a_k^2+\sigma b_k)$ for some constant $C$.

\section{Proof of Theorem~\ref{thm:global_closeness_and_convergence}}
\label{sec:proof-global}

The proof of Theorem~\ref{thm:global_closeness_and_convergence} is
directly implied by the following two lemmas.
Lemma~\ref{lem:global_closeness} ensures that, as we initialize close
to the manifold and choosing step size reasonably small, the iterates
will be automatically close to the
manifold. Lemma~\ref{lem:global_convergence} shows that, as the
iterates are close to the manifold at each step, the value of function
$f$ can decrease by a reasonable amount, so that there will be a
iterates with small extended Riemannian gradient.

\begin{lemma}\label{lem:global_closeness}
Let $0 < \eps_1 \le [\minsvF^2 / (2\smoothF)] \wedge \eps_0$, $x_0 \in \cM_{\eps_1}$ and $\eta \le [\eps_1/(2 \smoothF \Gf^2)]^{1/2}$. Then $\{ x_k \}_{k \ge 0} \subseteq \cM_{\eps_1}$. 
\end{lemma}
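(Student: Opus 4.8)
The plan is to prove this by induction on $k$: assuming $x_k \in \cM_{\eps_1}$, we show $x_{k+1} \in \cM_{\eps_1}$, i.e., $\|F(x_{k+1})\|_2 \le \eps_1$. The base case $k=0$ is the hypothesis. For the inductive step, I would start from the closed-form SQP update~\eqref{equation:delta}, $x_{k+1} = x_k - \eta \rgrad f(x_k) - \nabla F(x_k)^\dagger F(x_k)$, and expand $F(x_{k+1})$ by a second-order Taylor expansion of $F$ around $x_k$. Writing $\Delta = x_{k+1} - x_k$, we have $F(x_{k+1}) = F(x_k) + \nabla F(x_k)\Delta + E$, where $\|E\|_2 \le (\smoothF/2)\|\Delta\|_2^2$ by the Lipschitz-gradient property of $F$. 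The crucial cancellation is that $\nabla F(x_k)\Delta = -\eta\, \nabla F(x_k)\rgrad f(x_k) - \nabla F(x_k)\nabla F(x_k)^\dagger F(x_k) = -F(x_k)$, since $\nabla F(x_k)$ annihilates the tangential term $\rgrad f(x_k) = \proj_{x_k}\nabla f(x_k)$ and $\nabla F(x_k)\nabla F(x_k)^\dagger = \id_m$ (full row rank, guaranteed by Assumption~\ref{assumption:cq} in the neighborhood $\cM_{\eps_0}$, valid here since $\eps_1 \le \eps_0$). Hence $F(x_{k+1}) = E$ and $\|F(x_{k+1})\|_2 \le (\smoothF/2)\|\Delta\|_2^2$.

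It then remains to bound $\|\Delta\|_2$. We have $\|\Delta\|_2 \le \eta\|\rgrad f(x_k)\|_2 + \|\nabla F(x_k)^\dagger F(x_k)\|_2$. For the first term, $\|\rgrad f(x_k)\|_2 = \|\proj_{x_k}\nabla f(x_k)\|_2 \le \|\nabla f(x_k)\|_2 \le \Gf$ by the global bound in Assumption~\ref{ass:global}, item 1 (applicable since $x_k \in \cM_{\eps_1} \subseteq \cM_{\eps_0}$). For the second term, $\|\nabla F(x_k)^\dagger F(x_k)\|_2 \le \|\nabla F(x_k)^\dagger\|_{\op}\|F(x_k)\|_2 \le \eps_1/\minsvF$ using $\sigma_{\min}(\nabla F(x_k)) \ge \minsvF$. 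Combining, $\|\Delta\|_2 \le \eta\Gf + \eps_1/\minsvF$, so $\|F(x_{k+1})\|_2 \le (\smoothF/2)(\eta\Gf + \eps_1/\minsvF)^2 \le \smoothF(\eta^2\Gf^2 + \eps_1^2/\minsvF^2)$, using $(a+b)^2 \le 2a^2 + 2b^2$. Now the two hypotheses on $\eps_1$ and $\eta$ close the bound: $\eta \le (\eps_1/(2\smoothF\Gf^2))^{1/2}$ gives $\smoothF\eta^2\Gf^2 \le \eps_1/2$, and $\eps_1 \le \minsvF^2/(2\smoothF)$ gives $\smoothF\eps_1^2/\minsvF^2 \le \eps_1/2$. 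Adding, $\|F(x_{k+1})\|_2 \le \eps_1$, completing the induction.

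The only genuinely delicate point is the exact cancellation $\nabla F(x_k)\Delta = -F(x_k)$, which is precisely the linearized feasibility constraint built into the SQP subproblem; everything else is routine Taylor-remainder and operator-norm bookkeeping, and the constants in the statement are exactly tuned so each error term absorbs half of $\eps_1$. One should also note the bound uses $\eps_1 \le \eps_0$ to legitimately invoke the local constraint-qualification and smoothness constants on $\cM_{\eps_1}$, and that the second term $\eps_1/\minsvF$ could alternatively be subsumed more crudely; the two-term split above is what matches the stated constants cleanly.
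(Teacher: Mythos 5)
Your proposal is correct and follows essentially the same route as the paper: induction on $k$, exact cancellation of the linearized constraint term $F(x_k) + \nabla F(x_k)\Delta = 0$, and a second-order Taylor remainder bounded using the hypotheses on $\eta$ and $\eps_1$ so that each error term absorbs $\eps_1/2$. The only cosmetic difference is that the paper splits $\|\Delta\|_2^2$ exactly via orthogonality of the tangential and normal components while you use the triangle inequality with $(a+b)^2 \le 2a^2+2b^2$; the resulting constants coincide.
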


\begin{proof}
We use proof by induction. We assume $x_k \in \cM_{\eps_1}$, i.e., we have $\| F(x_k) \|_2 \le \eps_1$. We would like to show that $\| F(x_{k+1}) \|_2 \le \eps_1$, where $x_{k+1}$ gives
\begin{equation}\label{eqn:global_iterates}
\begin{aligned}
x_{k+1}  = x_k - \eta\rgrad f(x_k) - \grad F(x_k)^\dagger F(x_k).
\end{aligned}
\end{equation}
Performing Taylor's expansion of $F(x_{k+1})$ at $x_k$, we have
\[
\begin{aligned}
&\| F(x_{k+1}) \|_2 = \| F(x_k + (x_{k+1 }- x_k)) \|_2 \\
\le ~& \| F(x_k) - \nabla F(x_k) [ \eta \rgrad f(x_k) + \grad F(x_k)^\dagger F(x_k)] \|_2  + \smoothF \| x_{k+1} - x_k \|_2^2.\\
\end{aligned}
\]
Note we have
\[
\begin{aligned}
\grad F(x_k) \rgrad f(x_k) =& \grad F(x_k) \proj_{x_k} \grad f(x_k) = 0, \\
\nabla F(x_k) \nabla F(x_k)^\dagger =& \id_n,
\end{aligned}
\]
which gives
\[
\begin{aligned}
&F(x_k) - \nabla F(x_k) [ \eta \rgrad f(x_k) + \grad F(x_k)^\dagger
F(x_k)] = F(x_k) - F(x_k) = 0.
\end{aligned}
\]
As a result, we have
\[
\begin{aligned}
&\| F(x_{k+1}) \|_2 = \smoothF \| x_{k+1} - x_k \|_2^2\\
=& \smoothF [ \eta^2 \| \rgrad f(x_k) \|_2^2 + F(x_k)^\sT \grad F(x_k)^{\dagger \sT} \grad F(x_k)^\dagger F(x_k)]\\
\le& \smoothF \eta^2 \Gf^2 + (\smoothF/\minsvF^2) \| F(x_k) \|_2^2. 
\end{aligned}
\]
Note by induction assumption, we have $\| F(x_k) \|_2 \le \eps_1$. Hence as long as $\eps_1 \le \minsvF^2 / (2\smoothF)$ and $\eta^2 \le \eps_1/(2 \smoothF \Gf^2) $, we have 
\[
\begin{aligned}
&\| F(x_{k+1}) \|_2 \le \smoothF \eta^2 \Gf^2 + (\smoothF/\minsvF^2) \| F(x_k) \|_2^2 
\le \eps_1 / 2 + \eps_1 / 2 = \eps_1. 
\end{aligned}
\]
The lemma holds by noting that the initialization of the induction holds since $x_0 \in \cM_{\eps_1}$. 
\end{proof}

\begin{lemma}\label{lem:global_convergence}
There exists constant $K < \infty$, such that for any $\eps$ satisfying $0 < \eps \le \eps_\star = [\minsvF^2 / (2\smoothF)] \wedge [\smoothF \Gf^2/(2\smoothf^2)] \wedge \eps_0$, letting $\eta = [\eps/(2 \smoothF \Gf^2)]^{1/2}$ and initializing $x_0 \in \cM_\eps$, we have for any $k \in \N$: 
\[
\min_{i \in [k]} \| \rgrad f(x_i) \|_2^2 \le K \{ [f(x_0) - \Lbfeps]/ (k \sqrt \eps) + \sqrt{\eps} \}. 
\]
\end{lemma}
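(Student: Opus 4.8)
The plan is to run the standard ``descent lemma'' argument adapted to the infeasible SQP iterate~\eqref{equation:delta}, producing a per-step decrease of the form $f(x_{k+1}) \le f(x_k) - \frac{\eta}{2}\|\rgrad f(x_k)\|_2^2 + C\eps$, and then to telescope. First I would invoke Lemma~\ref{lem:global_closeness} with $\eps_1 = \eps$: under the stated constraint $\eps \le \eps_\star \le [\minsvF^2/(2\smoothF)] \wedge \eps_0$ and with $\eta = [\eps/(2\smoothF\Gf^2)]^{1/2}$, its hypotheses hold with equality, so every iterate stays in $\cM_\eps \subseteq \cM_{\eps_0}$. Consequently, for all $k$ we may use $\|F(x_k)\|_2 \le \eps$, $\|\nabla f(x_k)\|_2 \le \Gf$, $\sigma_{\min}(\nabla F(x_k)) \ge \minsvF$, and $f(x_k) \ge \Lbfeps$.

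Next, writing $\Delta_k := x_{k+1}-x_k = -\eta\,\rgrad f(x_k) - \nabla F(x_k)^\dagger F(x_k)$ and applying the global $\smoothf$-Lipschitz gradient bound for $f$,
\[
  f(x_{k+1}) \le f(x_k) + \langle \nabla f(x_k), \Delta_k\rangle + \frac{\smoothf}{2}\|\Delta_k\|_2^2 .
\]
Since $\proj_{x_k}$ is an orthogonal projection, $\langle \nabla f(x_k), \rgrad f(x_k)\rangle = \|\proj_{x_k}\nabla f(x_k)\|_2^2 = \|\rgrad f(x_k)\|_2^2$, while $|\langle \nabla f(x_k), \nabla F(x_k)^\dagger F(x_k)\rangle| \le \Gf\eps/\minsvF$. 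Because $\rgrad f(x_k)$ is tangent and $\nabla F(x_k)^\dagger F(x_k)$ is normal, $\|\Delta_k\|_2^2 = \eta^2\|\rgrad f(x_k)\|_2^2 + \|\nabla F(x_k)^\dagger F(x_k)\|_2^2 \le \eta^2\|\rgrad f(x_k)\|_2^2 + \eps^2/\minsvF^2$. The bound $\eps \le \smoothF\Gf^2/(2\smoothf^2)$ forces $\eta \le 1/(2\smoothf)$, so $\eta - \frac{\smoothf}{2}\eta^2 \ge \frac{\eta}{2}$; collecting terms and absorbing $\eps^2 \le \eps_\star\eps$ yields a constant $C_0 = C_0(\Gf,\minsvF,\smoothf,\eps_\star)$ with
\[
  f(x_{k+1}) \le f(x_k) - \frac{\eta}{2}\|\rgrad f(x_k)\|_2^2 + C_0\eps .
\]

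Finally I would rearrange this to $\frac{\eta}{2}\|\rgrad f(x_i)\|_2^2 \le f(x_i) - f(x_{i+1}) + C_0\eps$ and sum over $i \in [k]$; telescoping together with $f(x_{k+1}) \ge \Lbfeps$ and one more use of the one-step bound for the step $x_0 \to x_1$ give $\frac{\eta}{2}\sum_{i\in[k]}\|\rgrad f(x_i)\|_2^2 \le f(x_0) - \Lbfeps + (k+1)C_0\eps$. Dividing by $k$ and bounding the minimum by the average,
\[
  \min_{i\in[k]}\|\rgrad f(x_i)\|_2^2 \le \frac{2}{\eta k}\big(f(x_0) - \Lbfeps\big) + \frac{4C_0\eps}{\eta};
\]
substituting $1/\eta = (2\smoothF\Gf^2/\eps)^{1/2}$ turns the two terms into constant multiples of $(f(x_0)-\Lbfeps)/(k\sqrt\eps)$ and $\sqrt\eps$ respectively, which is the claim with $K = \max\{2(2\smoothF\Gf^2)^{1/2},\ 4C_0(2\smoothF\Gf^2)^{1/2}\}$.

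Since each step is an elementary inequality I do not expect a genuine obstacle; the points needing care are identifying $\langle \nabla f,\rgrad f\rangle = \|\rgrad f\|_2^2$ and the orthogonal splitting of $\Delta_k$ so that the feasibility error contributes only $O(\eps)$ per step, and balancing $\eta$ against $\eps$ so the telescoped bound optimizes — which is exactly why the step size is taken proportional to $\sqrt\eps$.
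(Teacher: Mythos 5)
Your proposal is correct and follows essentially the same route as the paper's proof: the descent lemma applied to the SQP step, the identity $\langle \nabla f, \rgrad f\rangle = \|\rgrad f\|_2^2$, the orthogonal tangent/normal splitting of the step, the bound $\eta \le 1/(2\smoothf)$ to absorb the quadratic gradient term, and telescoping with $\eta \propto \sqrt\eps$ to balance the two error terms. The only (harmless) differences are cosmetic: you invoke Lemma~\ref{lem:global_closeness} explicitly rather than implicitly, and you bound $\|\nabla f\|_2$ by $\Gf$ where the paper uses $\lipf$.
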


\begin{proof}
Performing Taylor's expansion of $f(x_{k+1})$ at $x_k$, we have 
\[
 f(x_{k+1} ) - f(x_k) - \< \nabla f(x_k), x_{k+1} - x_k \> \le \smoothf \| x_{k+1} - x_k \|_2^2.
\]
Using Eq. (\ref{eqn:global_iterates}) gives
\[
\begin{aligned}
&f(x_{k+1}) - f(x_k) + \eta \| \rgrad f(x_k) \|_2^2 + \nabla f(x_k)^\sT \nabla F(x_k)^\dagger F(x_k) \\
\le& \smoothf[ \eta^2 \| \rgrad f(x_k)\|_2^2 + F(x_k)^\sT \nabla F(x_k)^{\dagger \sT} \nabla F(x_k)^\dagger F(x_k)]\\
\le& \smoothf[ \eta^2 \| \rgrad f(x_k)\|_2^2 + (1 / \minsvF^2) \| F(x_k) \|_2^2]. 
\end{aligned}
\]
By the choice of $\eps$ and $\eta$, we have $\eta \le 1/ (2\smoothf)$, rearranging the terms gives
\[
\begin{aligned}
&f(x_{k+1}) - f(x_k) + (\eta/2) \| \rgrad f(x_k) \|_2^2 \\
\le&  (\lipf / \minsvF) \| F(x_k) \|_2 + (\smoothf / \minsvF^2) \| F(x_k) \|_2^2\\
\le& [(\lipf / \minsvF) + (\smoothf / \minsvF^2) \eps_\star] \eps \equiv K_0 \eps. 
\end{aligned}
\]
Performing telescope summation and rearranging the terms,
\[
\begin{aligned}
&\frac{1}{k}\sum_{i=1}^k \| \rgrad f(x_i) \|_2^2 
\le  2 [f(x_0) - f(x_k)] / (k\eta) + 2 K_0 \eps / \eta
 \le  K\{ [f(x_0) - f(x_k)] / (k \sqrt \eps) + \sqrt{\eps}\},
\end{aligned}
\]
for some constant $K$. Since we know $x_k \in \cM_{\eps}$, this concludes the proof of this lemma. 
\end{proof}

\begin{figure*}[ht!]
  \centering
  \subfigure[Vary $\kappa$]{
    \includegraphics[width=0.2\textwidth]{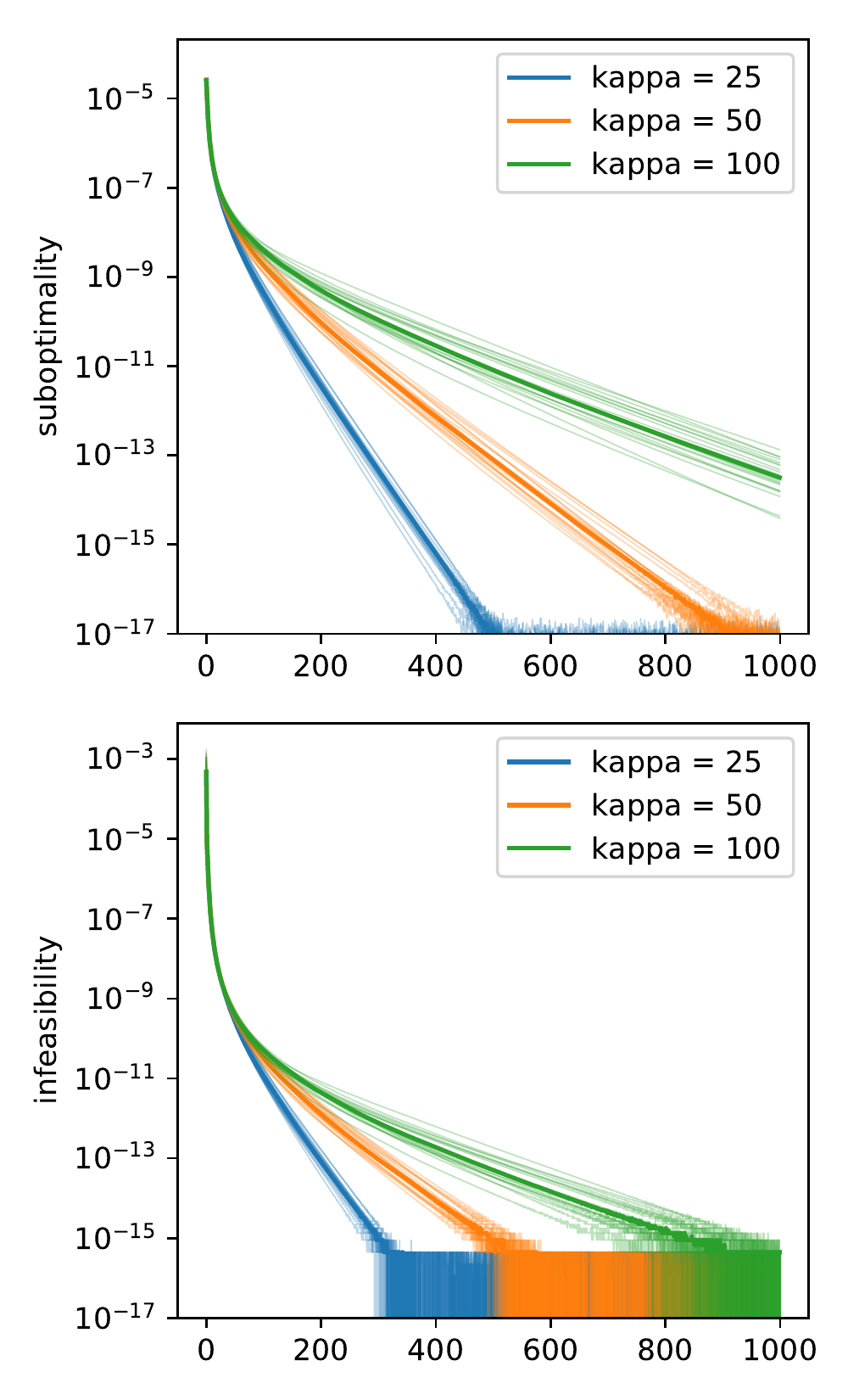}
  }\label{figure:compare-kappa}
  \subfigure[Vary $\eps$]{
    \includegraphics[width=0.2\textwidth]{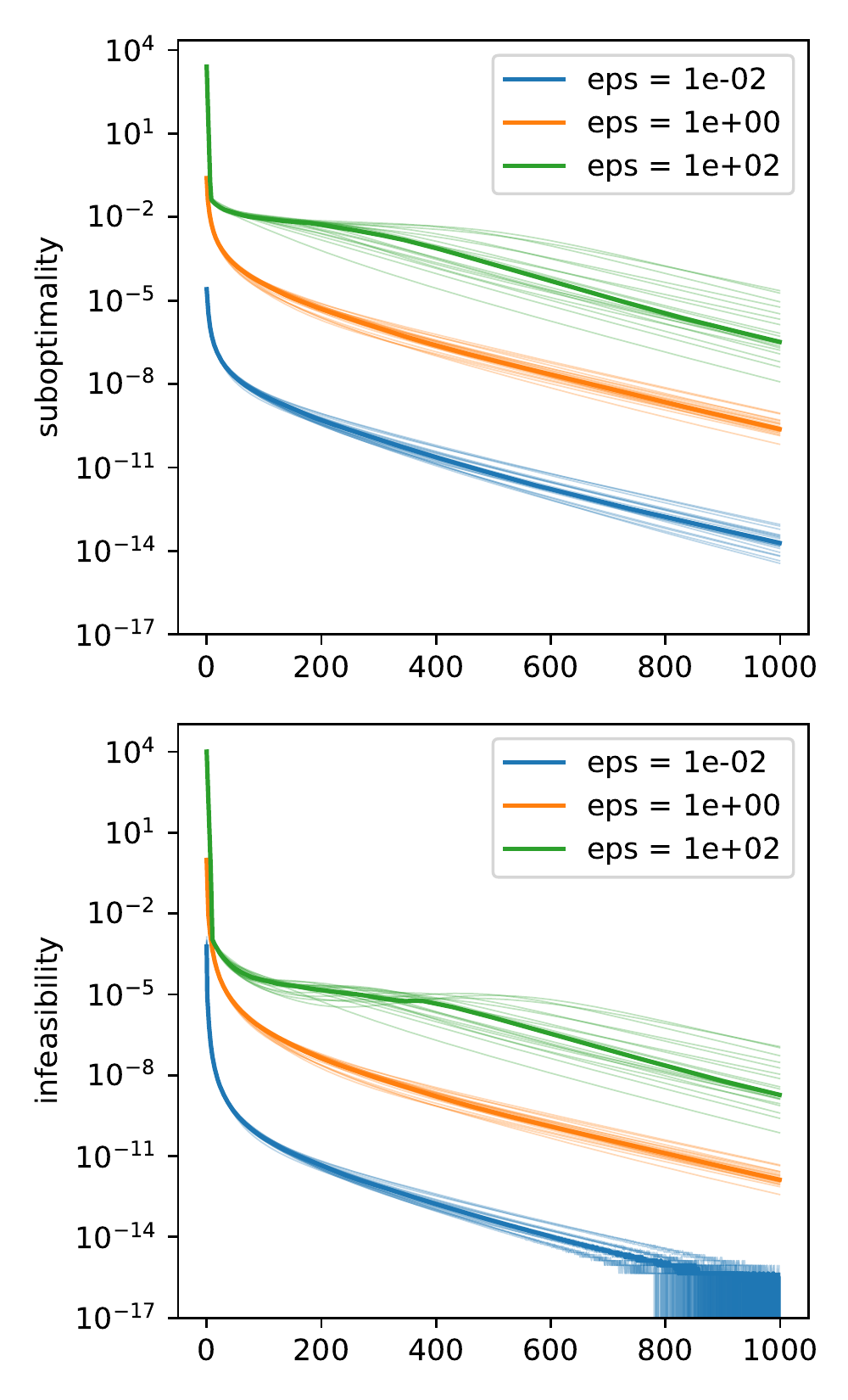}
  }\label{figure:compare-eps}
  \subfigure[Compare SQP and Riemannian]{
    \includegraphics[width=0.4\textwidth]{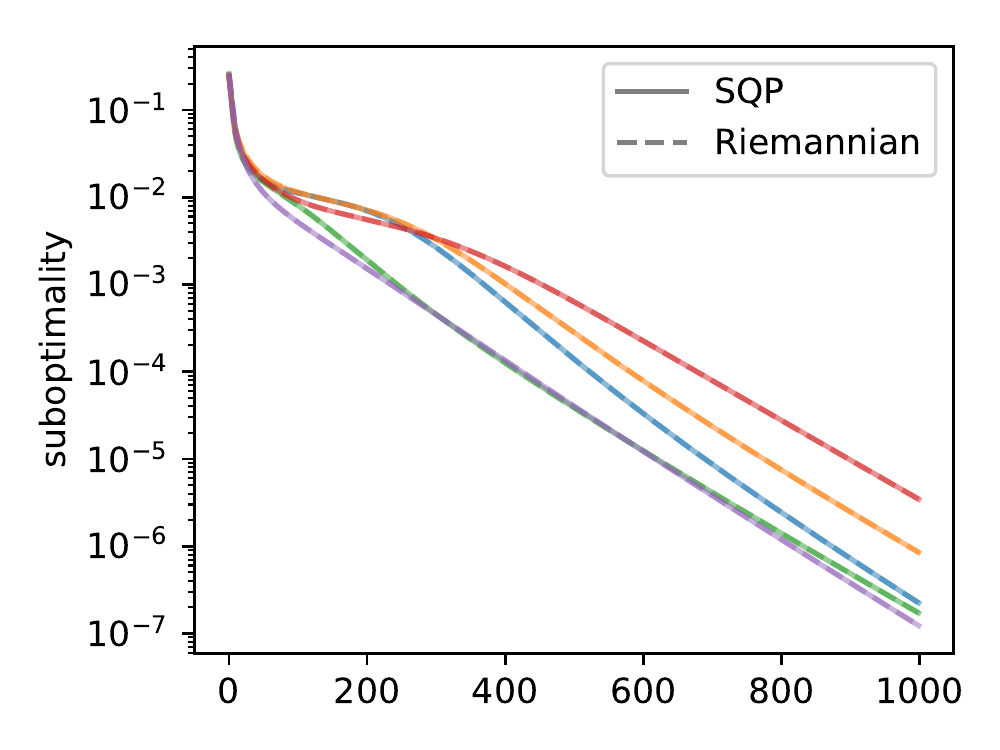}
  }\label{figure:riemannian}
  \caption{\small Convergence of SQP. (a)(b) Effect of condition
    number and initialization radius on the convergence. Thins lines
    show all the 20 instances and bold lines indicate the median
    performance. (c) Comparing SQP and Riemannian gradient descent
    over 5 instances.}
  \label{figure:figure}
\end{figure*}

\section{Example}
\label{section:example}
We provide the eigenvalue problem as a simple example illustrating our
convergence result. We emphasize that this is a simple and
well-studied problem; our goal here is only to illustrate the
connection between SQP and the Riemannian algorithms.

\begin{example}[SQP for eigenvalue problems]
  Consider the eigenvalue problem for a symmetric matrix
  $A\in\R^{n\times n}$:
  \begin{equation}
    \label{problem:eigenval}
    \begin{aligned}
      \minimize & ~~ \frac{1}{2}x^\top Ax \\
      \subjectto & ~~ \ltwo{x}^2 - 1 = 0.
    \end{aligned}
  \end{equation}
  This is an instance of problem~\eqref{problem:manopt} with
  $f(x)=(1/2)x^\top Ax$ and $F(x)=\ltwo{x}^2 - 1$. Let
  $\lambda_1<\lambda_2\le \dots\le \lambda_n$ be the eigenvalues of
  $A$, and $v_i(A)$ be the corresponding eigenvectors.

  The SQP iterate for this problem is $x\mapsto x_+=x+\Delta$, where
  $\Delta$ solves the subproblem
  \begin{align*}
    \minimize & ~~ \<Ax, \Delta\> + \frac{1}{2\eta}\ltwo{\Delta}^2 \\
    \subjectto & ~~ \ltwo{x}^2 + 2\<x, \Delta\> - 1 = 0.
  \end{align*}
  Applying~\eqref{equation:delta}, we obtain the explicit formula
  \begin{equation}
    \label{equation:sqp-eigenvalue}
    x_+ = \frac{\ltwo{x}^2 + 1}{2\ltwo{x}^2} x - \eta\Big(\id_n -
      \frac{xx^\top}{\ltwo{x}^2}\Big)Ax.
  \end{equation}
  By Theorem~\ref{theorem:convergence}, the local convergence
  rate is $1-1/\rcond$, which we now examine. At $x_\star=v_1(A)$,
  we have $\rhess f(x_\star) = A - \lambda_1 \id_n$, and the tangent
  space $\mc{T}_{x_\star}\mc{M}={\rm
    span}(v_2(A),\dots,v_n(A))$. Therefore, the condition number of
  the Riemannian Hessian is
  \begin{equation*}
    \rcond = \frac{\sup_{v\in\mc{T}_{x_\star}\mc{M},\ltwo{v}=1}\<v,
      \rhess
      f(x_\star)v\>}{\inf_{v\in\mc{T}_{x_\star}\mc{M},\ltwo{v}=1} \<v, 
      \rhess
      f(x_\star)v\>} = \frac{\lambda_n - \lambda_1}{\lambda_2 -
      \lambda_1}.
  \end{equation*}
  Hence, choosing the right stepsize $\eta$, the convergence rate of
  the SQP method is 
  \begin{equation*}
    1 - \frac{1}{\rcond} = \frac{\lambda_n - \lambda_2}{\lambda_n -
      \lambda_1},
  \end{equation*}
  matching the rate of power iteration and Riemannian gradient
  descent.

  The keen reader might find that the
  iterate~\eqref{equation:sqp-eigenvalue} converges globally to
  $x_\star$ as long as $\<x_0,x_\star\>\neq 0$. This is more
  optimistic than our
  Theorem~\ref{thm:global_closeness_and_convergence} (which only
  guarantees global convergence to stationary point). Whether such
  global convergence holds more generally would be an interesting
  direction for future study.
\end{example}

\section{Numerical experiments}
\label{sec:exp}
\paragraph{Setup}
We experiment with the SQP algorithm on random instances of the
eigenvalue problem~\eqref{problem:eigenval} with $d=1000$. Each
instance $A$ was generated randomly with a controlled condition
number $\rcond=\frac{\lambda_n - \lambda_1}{\lambda_2 - \lambda_1}$,
and the stepsize was chosen as $\eta=\frac{1}{2(\lambda_n-\lambda_1)}$
to optimize for the local linear rate. The initialization $x_0$ is
sampled randomly around the solution $x_\star$ with average distance
$\eps$ (recall $\ltwo{x_\star}=1$).

We run the following three sets of comparisons and plot the results in
Figure~\ref{figure:figure}.
\begin{enumerate}[(a)]
\item Test the effect of $\rcond$ on the local linear rate, with
  $\rcond\in\set{25, 50, 100}$ and fixed $\eps=0.01$.
\item Test the effect of initialization radius $\eps$ (i.e. localness)
  on the convergence, with $\eps\in\set{0.01, 1, 100}$ and fixed
  $\kappa=100$.
\item Test whether SQP is close to Riemannian gradient descent when
  initialized at a same feasible start $x_0\in\mc{M}$.
\end{enumerate}

\paragraph{Results} In experiment (a), we see indeed that the local
linear rate of SQP scales as $1-C/\rcond$: doubling the condition
number will double the number of iterations required for halving the
sub-optimality. Experiment (b) shows that the linear convergence is
indeed more robust locally than globally; when
initialized very far away ($\eps=100$), linear convergence with the
same rate happens on most of the instances after a while, but there
does exist bad instances on which the convergence is slow. This
corroborates our theory that the global convergence of SQP is more
sensitive to the stepsize choice as the SQP is not guaranteed to
approach the manifold when initialized far away. Experiment (c)
verifies our intuition that SQP is approximately equal to Riemannian
gradient descent: with a feasible start, their iterates stay almost
exactly the same.

\section{Conclusion}
We established local and global convergence of a cheap SQP algorithm
building on intuitions from Riemannian optimization. Potential future
directions include generalizing our global result to ``far from the
manifold'', as well as identifying problem structures under which we
obtain global convergence to local minimum.

\section*{Acknowledgement}
We thank Nicolas Boumal and John Duchi for a number of helpful
discussions. YB was partially supported by John Duchi's National
Science Foundation award CAREER-1553086. SM was supported by an Office
of Technology Licensing Stanford Graduate Fellowship.

\bibliographystyle{amsalpha}
\bibliography{main}

\newpage

\appendix

\section{Proof of technical results}
\subsection{Some tools}
\begin{lemma}[\cite{MengZh10}]
  \label{lemma:pseudoinverse-bound}
  For $x_1,x_2 \in \ball(x_\star, \delta)$, we have
  \begin{equation*}
    \dl \nabla F(x_1)^\dagger - \nabla F(x_2)^\dagger \dl_{\op} \le
    \smoothD \dl x_1-x_2 \dl_2.
  \end{equation*}
  where $\smoothD = 2\smoothF/\minsvF^2$.
\end{lemma}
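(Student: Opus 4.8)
The plan is to derive this Lipschitz bound from the classical first-order perturbation identity for the Moore--Penrose pseudoinverse, exploiting that Assumption~\ref{assumption:cq} forces $\nabla F(x)$ to have full row rank $m$ throughout $\ball(x_\star,\delta)$. First I would record the two inputs supplied by the assumptions. Since $\ball(x_\star,\delta)$ is convex and $\sigma_{\min}(\nabla F(x)) \ge \minsvF$ on it, every point on the segment joining $x_1$ and $x_2$ has $\nabla F$ of full row rank, so $\nabla F(x)\nabla F(x)^\sT$ is invertible and
\begin{equation*}
  \opnorm{\nabla F(x)^\dagger} = \frac{1}{\sigma_{\min}(\nabla F(x))} \le \frac{1}{\minsvF};
\end{equation*}
moreover, the Lipschitz-gradient assumption on $F$ gives $\opnorm{\nabla F(x_1) - \nabla F(x_2)} \le \smoothF \ltwo{x_1 - x_2}$.

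Write $A = \nabla F(x_1)$ and $B = \nabla F(x_2)$. The key step is the pseudoinverse perturbation identity: for two matrices of the same shape, $B^\dagger - A^\dagger$ equals $-B^\dagger(B-A)A^\dagger$ plus two correction terms involving $(\id_m - AA^\dagger)$ and $(\id_n - B^\dagger B)$ respectively. Because $A$ and $B$ have full row rank, $AA^\dagger = \id_m$ kills the first correction term and one is left with
\begin{equation*}
  B^\dagger - A^\dagger = -B^\dagger(B-A)A^\dagger + (\id_n - B^\dagger B)(B-A)^\sT A^{\dagger\sT}A^\dagger.
\end{equation*}
Here $\id_n - B^\dagger B = \proj_{x_2}$ is an orthogonal projection, so $\opnorm{\id_n - B^\dagger B}\le 1$, and $\opnorm{A^{\dagger\sT}A^\dagger} = \opnorm{A^\dagger}^2$. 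Taking operator norms, using submultiplicativity and the bound $\opnorm{\nabla F(x)^\dagger}\le 1/\minsvF$ above,
\begin{equation*}
  \opnorm{B^\dagger - A^\dagger} \le \opnorm{B^\dagger}\opnorm{B-A}\opnorm{A^\dagger} + \opnorm{B-A}\opnorm{A^\dagger}^2 \le \frac{2}{\minsvF^2}\opnorm{B-A}.
\end{equation*}
Combining with $\opnorm{B-A}\le\smoothF\ltwo{x_1-x_2}$ gives $\opnorm{\nabla F(x_1)^\dagger - \nabla F(x_2)^\dagger}\le (2\smoothF/\minsvF^2)\ltwo{x_1-x_2} = \smoothD\ltwo{x_1-x_2}$, which is the claim.

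A slightly more self-contained alternative is to integrate along the segment: setting $\gamma(t) = x_1 + t(x_2-x_1)$, the map $t\mapsto \nabla F(\gamma(t))^\dagger$ is $C^1$ (since $F$ is $C^2$ and $\nabla F$ stays full rank on the convex ball), with derivative $-A^\dagger H A^\dagger + (\id_n - A^\dagger A)H^\sT A^{\dagger\sT}A^\dagger$ where $A = \nabla F(\gamma(t))$ and $H = \frac{d}{dt}\nabla F(\gamma(t))$ satisfies $\opnorm{H}\le\smoothF\ltwo{x_1-x_2}$; this derivative has operator norm at most $(2/\minsvF^2)\smoothF\ltwo{x_1-x_2}$, and integrating over $t\in[0,1]$ recovers the bound. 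I expect the only mildly delicate point to be stating the pseudoinverse difference (or derivative) identity and its range of validity correctly --- in particular verifying that full row rank genuinely eliminates the term containing $\id_m - AA^\dagger$; once that is in place the rest is a one-line norm estimate, and there is no slack that would make the constant worse than $\smoothD = 2\smoothF/\minsvF^2$.
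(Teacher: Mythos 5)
Your proof is correct, but note that the paper does not prove this lemma at all --- it is stated as an imported result with a citation to \cite{MengZh10}, so there is no internal proof to match. Your route is the standard one and it works: the three-term Wedin decomposition
\begin{equation*}
  B^\dagger - A^\dagger = -B^\dagger(B-A)A^\dagger + B^\dagger B^{\dagger\sT}(B-A)^\sT(\id_m - AA^\dagger) + (\id_n - B^\dagger B)(B-A)^\sT A^{\dagger\sT}A^\dagger
\end{equation*}
is an elementary algebraic identity (expand $B^\dagger(A-B)A^\dagger + B^\dagger(\id_m - AA^\dagger) - (\id_n - B^\dagger B)A^\dagger$ and use $B^\dagger = B^\dagger B^{\dagger\sT}B^\sT$, $A^\sT(\id_m - AA^\dagger)=0$, $A^\dagger = A^\sT A^{\dagger\sT}A^\dagger$, $(\id_n - B^\dagger B)B^\sT = 0$), and Assumption~\ref{assumption:cq} gives full row rank so $AA^\dagger = \id_m$ kills the middle term, exactly as you say. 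The remaining estimate $\opnorm{B^\dagger}\opnorm{B-A}\opnorm{A^\dagger} + \opnorm{B-A}\opnorm{A^\dagger}^2 \le (2/\minsvF^2)\opnorm{B-A}$ together with $\opnorm{\nabla F(x)^\dagger} = 1/\sigma_{\min}(\nabla F(x)) \le 1/\minsvF$ and the $\smoothF$-Lipschitzness of $\nabla F$ yields precisely $\smoothD = 2\smoothF/\minsvF^2$. What your derivation buys over the citation is self-containedness and a transparent constant: the cited reference is about optimal perturbation constants for general (rank-preserving) perturbations, which is more than is needed here, whereas in the full-row-rank regime the crude bound with constant $2$ already matches what the paper uses. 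Your integral alternative is also fine since $A \mapsto A^\dagger = A^\sT(AA^\sT)^{-1}$ is smooth on the open set of full-row-rank matrices and the ball is convex. The only point worth stating explicitly in a final write-up is the verification of the Wedin identity itself (or a precise reference for it), since that is the one nontrivial ingredient.
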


\begin{lemma}
  \label{lemma:riemannian-gradient-lipschitz}
  The extended Riemannian gradient $\rgrad f(x)$ is
  $\smoothr$-Lipschitz in $\ball(x_\star, \delta)$, where
  $\smoothr=\smoothP\lipf+\smoothf$. 
\end{lemma}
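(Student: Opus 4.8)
The plan is to use the closed-form expression $\rgrad f(x) = \proj_x \nabla f(x)$ and control the two sources of variation separately: the change in the Euclidean gradient $\nabla f$ and the change in the projector $\proj_x$. Both pieces are already known to be Lipschitz---the former by the Lipschitz-gradient part of Assumption~\ref{assumption:smoothness}, the latter by Lemma~\ref{lemma:projection-perturbation-bound}---so the result should follow from a single add-and-subtract step together with the triangle inequality.

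Concretely, for $x_1, x_2 \in \ball(x_\star,\delta)$ I would write
\begin{equation*}
  \rgrad f(x_1) - \rgrad f(x_2) = \proj_{x_1}\big(\nabla f(x_1) - \nabla f(x_2)\big) + \big(\proj_{x_1} - \proj_{x_2}\big)\nabla f(x_2),
\end{equation*}
and bound each term. For the first term, $\proj_{x_1}$ is an orthogonal projection onto $T_{x_1}\cM$ (well-defined by Assumption~\ref{assumption:cq}), so $\opnorm{\proj_{x_1}} \le 1$, and $\ltwo{\nabla f(x_1) - \nabla f(x_2)} \le \smoothf \ltwo{x_1 - x_2}$. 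For the second term, $\ltwo{\nabla f(x_2)} \le \lipf$ since $f$ is $\lipf$-Lipschitz on $\ball(x_\star,\delta)$, and $\opnorm{\proj_{x_1} - \proj_{x_2}} \le \smoothP \ltwo{x_1 - x_2}$ by Lemma~\ref{lemma:projection-perturbation-bound}. Summing the two contributions gives $\ltwo{\rgrad f(x_1) - \rgrad f(x_2)} \le (\smoothP\lipf + \smoothf)\ltwo{x_1 - x_2}$, which is exactly the claim with $\smoothr = \smoothP\lipf + \smoothf$.

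There is no real obstacle here; the statement is a routine consequence of the perturbation bound on projections. The only points to check are that every point invoked stays inside $\ball(x_\star,\delta)$ so that the cited Lipschitz constants apply, and that $\proj_x$ is genuinely a contraction in operator norm, which holds because it is an orthogonal projector whenever $\nabla F(x)\nabla F(x)^\sT$ is invertible---guaranteed by the constraint qualification in Assumption~\ref{assumption:cq}.
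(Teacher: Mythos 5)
Your proof is correct and is essentially identical to the paper's: the same add-and-subtract decomposition (with the intermediate term $\proj_{x_1}\nabla f(x_2)$ rather than the paper's $\proj_{x_2}\nabla f(x_1)$, an immaterial difference), combined with $\opnorm{\proj_x}\le 1$, the $\smoothf$-Lipschitz gradient, the $\lipf$ bound on $\nabla f$, and Lemma~\ref{lemma:projection-perturbation-bound}.
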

\begin{proof}
  We have
  \[
  \begin{aligned}
  \dl \rgrad f(x_1) - \rgrad f(x_2) \dl_2 = & \dl \proj_{x_1} \nabla f(x_1) - \proj_{x_2} \nabla f(x_2) \dl_2 \\
  \le& \dl (\proj_{x_1} - \proj_{x_2}) \nabla f(x_1) \dl_2 + \dl \proj_{x_2} (\nabla f(x_1) - \nabla f(x_2)) \dl_2\\
  \le& (\smoothP \lipf + \smoothf)\dl x_1 - x_2 \dl_2.
  \end{aligned}
  \]
\end{proof}

\subsection{Proof of Lemma~\ref{lemma:riemannian-taylor-expansion}}
\label{appendix:proof-riemannian-taylor-expansion}

For any $x \in \ball(x_\star, \eps_0)$, denote $x_p = \proj_{x_\star} (x - x_\star) + x_\star$. We prove the following equation first
\begin{align}\label{eqn:local_riemannian_expansion_tangent}
\rgrad f(x_p) = \rhess f(x_\star) (x_p - x_\star) + r(x_p),
\end{align}
where $r(x_p) \le \cra \| x_p - x_\star \|_2^2$. 

First, we show that $\rgrad f(x_p)$ can be well approximated by $\proj_{x_\star}\rgrad f(x_p)$. Denoting $r_0 = \proj_{x_\star} \rgrad f(x_p) - \rgrad f(x_p)$, by Lemma~\ref{lemma:projection-perturbation-bound} and~\ref{lemma:riemannian-gradient-lipschitz}, we have
\begin{align*}
\dl r_0 \dl_2 = & \dl\proj_{x_\star}^\perp \rgrad f(x_p) \dl_2=
  \dl \proj_{x_\star}^\perp \proj_{x_p} \rgrad f(x_p) \dl_2 \\
 \le&
\dl \proj_{x_\star}^\perp \proj_{x_p} \dl_{\op} \cdot \dl \rgrad f(x_p) \dl_2
\le \smoothP \dl x_p-x_\star \dl_2 \dl \rgrad f(x_p)\dl_2 \le
  \smoothP\smoothr \dl x_p-x_\star \dl_2^2.
\end{align*}

Then, for any $u \in \R^n$ with $\dl u \dl_2 = 1$, we have
\[
\begin{aligned}
&\< u, \proj_{x_\star} \rgrad f(x_p)  \> \\
=&\< u, \proj_{x_\star}\proj_{x_p}\{\nabla f(x_\star) + \nabla^2 f(x_\star) (x_p - x_\star) + 1/2 \cdot \nabla^3 f(\tilde x_p) [\,\cdot\,,(x_p - x_\star)^{\otimes 2}]\} \>\\
=& \< u, \proj_{x_\star}\proj_{x_p}[\nabla f(x_\star) + \nabla^2 f(x_\star) (x_p - x_\star)]\> + r_1,\\
\end{aligned}
\]
where
\[
\vert r_1 \vert =1/2 \cdot \vert \< u, \proj_{x_\star} \proj_{x_p} \nabla^3 f(\tilde x_p) [\,\cdot\,,(x_p - x_\star)^{\otimes 2}] \> \le 1/2 \cdot \rhof \dl x_p - x_\star \dl_2^2. 
\]
Then we have
\[
\begin{aligned}
&\< u, \proj_{x_\star} \rgrad f(x_p)  \> \\
=& \< u, \proj_{x_\star}\proj_{x_p}[\nabla f(x_\star) + \nabla^2 f(x_\star) (x_p - x_\star)]\> + r_1,\\
=&\< u, \proj_{x_\star}\proj_{x_p} \nabla f(x_\star)\> +  \< u, \proj_{x_\star} \nabla^2 f(x_\star) (x_p - x_\star) \>  + \< u, \proj_{x_\star}( \proj_{x_p} - \proj_{x_\star} )\nabla^2 f(x_\star) (x_p - x_\star) \> + r_1\\
=& \underbrace{\< u, \proj_{x_\star}\proj_{x_p} \nabla f(x_\star)\>}_{\rm I} +  \< u, \proj_{x_\star} \nabla^2 f(x_\star) (x_p - x_\star) \> + r_1 + r_2,
\end{aligned}
\]
where 
\[
\vert r_2 \vert = \vert \< u, \proj_{x_\star}( \proj_{x_p} - \proj_{x_\star} )\nabla^2 f(x_\star) (x_p - x_\star) \> \vert \le \smoothP \smoothf \dl x_p - x_\star \dl_2^2. 
\]

Then we look at the term $\rm I$. We have
\[
\begin{aligned}
\rm I =& - \< u, \proj_{x_\star}\nabla F(x_p)^\sT  \nabla F(x_p)^{\dagger \sT} \nabla f(x_\star)\> \\
=& - \< u, \proj_{x_\star}[\nabla F(x_p) - \nabla F(x_\star)]^\sT \nabla F(x_p)^{\dagger \sT} \nabla f(x_\star)\> \\
=& - \< u, \proj_{x_\star}\{\nabla^2 F(x_\star)[\, \cdot \,, \, \cdot\, , x_p - x_\star] + 1/2 \cdot \nabla^3 F(\tilde x_p) [\, \cdot \,, \, \cdot\, ,(x_p - x_\star)^{\otimes 2}]\}^\sT \nabla F(x_p)^{\dagger \sT} \nabla f(x_\star)\> \\
=& - \sum_{i=1}^m [ \nabla F(x_p)^{\dagger \sT} \nabla f(x_\star)]_i \nabla^2 F_i(x_\star)[x_p - x_\star, \proj_{x_\star} u] + r_3,
\end{aligned}
\]
where 
\[
\begin{aligned}
\vert r_3 \vert =& 1/2 \cdot \vert \sum_{i=1}^m [\nabla F(x_p)^{\dagger \sT} \nabla f(x_\star)]_i \nabla^3 F_i(\tilde x_p)[(x_p - x_\star)^{\otimes 2}, \proj_{x_\star} u] \vert \\
\le & \rhoF \lipf  / (2\minsvF)  \cdot \dl x_p - x_\star \dl_2^2.
\end{aligned}
\]
We further have 
\[
\begin{aligned}
\rm I =& - \sum_{i=1}^m [ \nabla F(x_p)^{\dagger \sT} \nabla f(x_\star)]_i \nabla^2 F_i(x_\star)[x_p - x_\star, \proj_{x_\star} u] + r_3\\
=& - \sum_{i=1}^m [ \nabla F(x_\star)^{\dagger \sT} \nabla f(x_\star)]_i \nabla^2 F_i(x_\star)[x_p - x_\star, \proj_{x_\star} u] + r_3 + r_4,
\end{aligned}
\]
where 
\[
\vert r_4 \vert = \sum_{i=1}^m \{ [\nabla F(x_p) - \nabla F(x_\star)]^{\dagger \sT} \nabla f(x_\star)\}_i \nabla^2 F_i(x_\star)[x_p - x_\star, P_{x_\star} u]\le \smoothD\lipf\smoothF  \cdot \dl x_p - x_\star \dl_2^2.
\]

Above all, combining all the terms, we have
\[
\begin{aligned}
&\vert \< u, \rgrad f(x_p) - \rhess f(x_p) (x_p - x_\star) \> \vert \le \dl r_0 \dl_2 +  \vert r_1 + r_2 + r_3 + r_4 \vert \\
 \le& (\smoothP\smoothr + 1/2 \cdot \rhof + \smoothP \smoothf +  \rhoF \lipf  / (2\minsvF) + \smoothD\lipf\smoothF) \dl x_p - x_\star \dl_2^2. 
\end{aligned}
\]
Taking $\cra = \smoothP\smoothr + 1/2 \cdot \rhof + \smoothP \smoothf + \rhoF \lipf  / (2\minsvF)+ \smoothD\lipf\smoothF$ we get Eq. (\ref{eqn:local_riemannian_expansion_tangent}).

Then by Lemma \ref{lemma:riemannian-gradient-lipschitz}, we have 
\begin{align}
\| \rgrad f(x) - \rgrad f(x_p) \| \le \crb \| x - x_p \|_2. 
\end{align}
This proves the lemma. 

\subsection{Proof of Corollary~\ref{corollary:rgrad-expansions}}
\label{appendix:proof-rgrad-expansions}
Let $\eps_0$ be given by
Lemma~\ref{lemma:riemannian-taylor-expansion}, $\eps\le \eps_0$, and 
$x\in\ball(x,\eps)$. We have
\begin{equation}
  \rgrad f(x) = \rhess f(x_\star)(x-x_\star) + r(x),~~\| r(x)\|_2 \le
  \cra\| x-x_\star\|_2^2 + \crb\| \proj_{x_\star}^\perp(x-x_\star)\|_2.
\end{equation}
Therefore we obtain the expansion
\begin{equation*}
  \<\rgrad f(x), x-x_\star\> = \<x-x_\star, \rhess
  f(x_\star)(x-x_\star)\> + R_1,
\end{equation*}
where $R_1$ is bounded as
\begin{equation}
  \begin{aligned}
    & \quad |R_1| = |\<r(x), x-x_\star\>| \le \cra \| x-x_\star\|_2^3 +
    \crb \| x-x_\star\|_2 \| \proj_{x_\star}^\perp(x-x_\star) \|_2 \\
    &  \le \eps\cra \| x-x_\star\|_2^2 +
    \eps\crb\| \proj_{x_\star}^\perp(x-x_\star)\|_2. 
  \end{aligned}
\end{equation}
Similarly, we have the expansion
\begin{equation*}
  \| \rgrad f(x_\star)\|_2^2 = \<x-x_\star, (\rhess
    f(x_\star) )^2(x-x_\star)\> + R_2,
\end{equation*}
where $R_2$ is bounded as (letting
$H=\lambda_{\max}(\rhess f(x_\star))$ for convenience)
\begin{equation}
  \begin{aligned}
    & \quad |R_2| \le 2|\<\rhess f(x_\star)(x-x_\star), r(x)\>| +
    \| r(x)\|_2^2 \\
    & \le 2H\big(\cra \| x-x_\star\|_2^3 +
      \crb\| x-x_\star\|_2 \| \proj_{x_\star}^\perp(x-x_\star)\|_2 \big)
    \\
    & \quad + \big(\cra^2\| x-x_\star\|_2^4 + 
      2\cra\crb\| x-x_\star\|_2^2\| \proj_{x_\star}^\perp(x-x_\star) 
      \|_2 + \crb^2\| \proj_{x_\star}^\perp(x-x_\star)\|_2^2\big) \\
    & \le (2H\cra\eps+\cra^2\eps^2)\| x-x_\star\|_2^2 +
    (2H\crb\eps +
    2\cra\crb\eps^2)\| \proj_{x_\star}^\perp(x-x_\star)\|_2 +
    \crb^2\eps\| \proj_{x_\star}^\perp(x-x_\star)\|_2 \\
    & \le
    \eps\underbrace{(2H\cra+\cra^2\eps_0)}_{\crat}\| x-x_\star\|_2^2
    + \eps\underbrace{(2H\crb+2\cra\crb\eps_0+\crb^2\eps_0)}_{\crbt}
    \| \proj_{x_\star}^\perp(x-x_\star)\|_2 \\
    & = \eps\big(\crat\| x-x_\star\|_2^2 +
      \crbt\| \proj_{x_\star}^\perp(x-x_\star)\|_2 \big).
  \end{aligned}
\end{equation}
Overloading the constants, we get
\begin{align*}
  & \quad \max\set{|R_1|, |R_2|} \le
    \eps\big(\cra\| x-x_\star\|_2^2 + 
    \crb\| \proj_{x_\star}^\perp(x-x_\star)\|_2\big) \\
  & = \eps\big(\cra\| \proj_{x_\star}(x-x_\star)\|_2^2 +
    \cra\|\proj_{x_\star}^\perp(x-x_\star)\|_2^2 +
    \crb\| \proj_{x_\star}^\perp(x-x_\star)\|_2 \big) \\
  & \le \eps\big(\cra\| x-x_\star\|_2^2 +
    \underbrace{(\crb+\cra\eps_0)}_{\crb}
    \|\proj_{x_\star}^\perp(x-x_\star)\|_2\big).
\end{align*}

\end{document}